\newtheorem{thm}{Theorem}[section]
\newtheorem{lemma}[thm]{Lemma}
\newtheorem{proposition}[thm]{Proposition}
\newtheorem{letterthm}{Theorem}
\newtheorem{lettercor}[letterthm]{Corollary}
\theoremstyle{definition} 
\newtheorem{definition}[thm]{Definition} 
\newtheorem{remark}[thm]{Remark}  
\newtheorem{remarks}[thm]{Remarks}
\numberwithin{equation}{section}
\def\vi{v_\infty}
\def\Q{\mathfrak{Q}}
\def\Z{\mathbb{Z}}
\def\N{\mathbb{N}}
\def\diam{\hbox{\rm diam}}
\def\l{\lambda}
\def\onto{{\kern3pt\to\kern-8pt\to\kern3pt}}
\def\<{\langle}
\def\>{\rangle}
\def\|{{\ |\ }}
\def\iff{{\Leftrightarrow}}
\def\cay{{\rm{Cay}}}
\def\G{\Gamma}
\def\d{\delta}
\def\l{\lambda}
\def\e{\varepsilon}
\def\*{^{\star}}
\begin{document}

\title{On the conjugacy problem for subdirect products of hyperbolic groups}

\author[Martin R.   Bridson]{Martin R.  ~Bridson}
\address{Mathematical Institute\\
Andrew Wiles Building\\
Woodstock Road\\
Oxford,   OX2 6GG}
\email{bridson@maths.ox.ac.uk}

\keywords{subdirect products,  conjugacy problem,   hyperbolic groups,  rel-cyclics Dehn function}

\subjclass[2010]{20F67,   20F10,  20F65} 


\begin{abstract}  If $G_1$ and $G_2$ are torsion-free hyperbolic groups and $P<G_1\times G_2$
is a finitely generated subdirect product, then  the conjugacy problem in $P$
is solvable if and only if there is a uniform algorithm to decide membership of the cyclic
subgroups in the finitely presented group $G_1/(P\cap G_1)$.
The proof of this result
relies on a  new technique for perturbing elements in a hyperbolic group to ensure that they are not proper powers. 
\end{abstract}

\maketitle

\def\ov{\overline}
\def\-{overline}
\def\tP{\tilde{P}}
\def\e{\varepsilon}
\def\o{\mathfrak{o}}

\section{Introduction}
A subgroup $P<G_1\times\cdots\times G_m$ is a {\em subdirect product} if its projection to each  
 factor $G_i$ is onto. 
In this article we will be concerned with the conjugacy problem for subdirect products of torsion-free 
hyperbolic groups.   In this setting, 
Bridson, Howie, Miller and Short \cite{BHMS} proved that
if $P$ projects to a subgroup
of finite index in each pair of factors $G_i\times G_j$, then $P$ is finitely
presented 
and its conjugacy problem is solvable.   
If $P$ does not virtually surject to each $G_i\times G_j$, then the situation is
much wilder.

In the  case $m=2$, there
is a correspondence between subdirect products and {\em fibre products}: if $P<G_1\times G_2$ is subdirect
then there is an isomorphism
$G_1/(P\cap G_1)\cong G_2/(P\cap G_2)$ and $P$ is the
fibre product of  the maps $p_i:G_i\onto G_i/(P\cap G_i)$;
see \cite{BM}, where it is proved that $P$ is finitely generated if and only if $Q:=G_1/(P\cap G_1)$
is finitely presented.

The case where $G_1\cong G_2\cong F$ is a free group was studied by Mihailova \cite{mihailova} and 
Miller \cite{cfm:thesis}. They proved that if $p_1=p_2$ and $Q$ has an unsolvable word problem, 
then neither the membership problem nor the conjugacy problem
for $P$ can be solved algorithmically.  These fibre products
$P<F\times F$ are not finitely presented,  and for finitely presented subdirect products of free groups (with
any number of factors),  the conjugacy  and membership problems can always be solved \cite{BM}.
But finite presentation does not save us in the hyperbolic setting: Baumslag,  Bridson, Miller and Short \cite{BBMS} constructed torsion-free hyperbolic groups $G$ and finitely presented  fibre
products $P<G\times G$ for which both the membership and conjugacy problems are unsolvable.

Our purpose here is to strengthen these results by establishing
criteria for solvability that are both necessary and sufficient.  It is not difficult to show that
the membership problem for a finitely generated
subdirect product of hyperbolic groups $P<G_1\times G_2$ is solvable if and only if the word problem in 
$Q= G_1/(G_1\cap P)$ is solvable or, equivalently, 
the Dehn function of $Q$ is recursive (see Theorem D in \cite{mb:CLfib} for a quantified version of this). 
In search of a similar characterisation for the solvability of the  
conjugacy problem in $P$, we are forced to examine the membership problem for cyclic subgroups
in $Q$.   

It is traditional to call the uniform membership problem for cyclic subgoups  the power problem.
Thus a finitely generated group $Q$ has a {\em solvable power problem} if there is  an algorithm that,
given two words $u,v$ in the generators,  will decide whether or not $u$ lies in the subgroup
of $Q$ generated by $v$.  When one moves from consideration of the word problem to consideration of the
power problem, the   appropriate
analogue of the Dehn function  is the {\em rel-cyclics Dehn function}, which 
we introduce in Section \ref{s:dehn}.

\begin{restatable}{letterthm}{mainthm} \label{t:iff} Let  $G_1$ and $G_2$
be torsion-free hyperbolic groups,    let $P<G_1\times G_2$ be a finitely generated
subdirect product, and let $Q$ be the finitely presented group $G_1/(G_1\cap P)$.   Then,   
the following conditions are equivalent:
\begin{enumerate}
\item the conjugacy problem in $P$ is solvable;
\item the power problem in $Q$ is solvable;
\item  the rel-cyclics Dehn function $\delta_Q^c(n)$ is recursive.
\end{enumerate} 
\end{restatable}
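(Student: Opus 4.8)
The plan is to prove the chain of equivalences $(1)\Leftrightarrow(2)$ and $(2)\Leftrightarrow(3)$, with the hyperbolic geometry concentrated in the implication $(2)\Rightarrow(1)$ and the delicate new ``perturbation'' technique entering there. We work throughout with the identification $Q\cong G_1/(G_1\cap P)\cong G_2/(G_2\cap P)$ and the description of $P$ as the fibre product of $p_i\colon G_i\onto Q$; recall that $P$ is finitely presented being a finitely generated subdirect product with $Q$ finitely presented. Denote by $N_i=G_i\cap P$ the (normal) kernels, so $P=\{(g_1,g_2): p_1(g_1)=p_2(g_2)\}$, and note $N_1\times 1<P$, $1\times N_2<P$.

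\textbf{Step 1: $(2)\Leftrightarrow(3)$.} This is the ``internal'' equivalence between an algorithmic property of $Q$ and the recursiveness of the quantitative invariant $\delta_Q^c$, and I expect it to follow the familiar pattern relating the word problem to the recursiveness of the Dehn function. One direction is soft: if $\delta_Q^c$ is recursive then, given $u,v$, one bounds the area of a ``rel-cyclics'' van Kampen diagram for $u$ over $\langle v\rangle$ in terms of $|u|,|v|$ and searches exhaustively, so the power problem is solvable. Conversely, if the power problem is solvable then (using that $Q$ is finitely presented, so the ordinary word problem is solvable) one can, for each $n$, effectively enumerate all pairs $(u,v)$ with $|u|+|v|\le n$ that witness a relation $u\in\langle v\rangle$, find a diagram for each, and take the maximum area; this computes $\delta_Q^c(n)$. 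The only care needed is to pin down the precise definition of $\delta_Q^c$ from Section~\ref{s:dehn} so that ``search for a diagram'' is genuinely a terminating procedure once a solution is known to exist.

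\textbf{Step 2: $(1)\Rightarrow(2)$.} Here I would reduce the power problem in $Q$ to the conjugacy problem in $P$. Given $u,v\in Q$, lift them to $g,h\in G_1$ with $p_1(g)=u$, $p_1(h)=v$ (effectively, since $p_1$ is given on generators). Choose any $\tilde v\in G_2$ with $p_2(\tilde v)=v$, so $(h,\tilde v)\in P$; similarly pick a lift of $u$ into $P$. The idea is to engineer two elements of $P$ whose conjugacy in $P$ is equivalent to $u\in\langle v\rangle$ in $Q$: roughly, using that $N_1\times 1$ is central-free but lies in $P$ and projects trivially to $Q$, one arranges a pair $(x_1,x_2),(y_1,y_2)\in P$ so that any conjugator in $P$ projects to a conjugator in $G_1$ that must carry $g$ to a power of $h$ modulo $N_1$; the hyperbolicity of $G_1$ (hence solvable conjugacy problem, and good control of centralisers of infinite-order elements, which are virtually cyclic) lets one read off the exponent. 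The technical content is choosing the test elements so that conjugacy in $P$ does not ``leak'' through the second factor — this is where one uses that $G_2$ is also hyperbolic and that $P$ is a fibre product, not an arbitrary subgroup.

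\textbf{Step 3: $(2)\Rightarrow(1)$ — the main obstacle.} This is the heart of the paper. Given $(a_1,a_2),(b_1,b_2)\in P$, I want to decide whether they are conjugate in $P$. First reduce to the case where all four coordinates have infinite order (the torsion-free and finite-order analyses being comparatively routine, handled via the solvable conjugacy problem in the hyperbolic factors). A candidate conjugator $(t_1,t_2)$ must satisfy $t_i a_i t_i^{-1}=b_i$ in $G_i$; by solvability of the conjugacy problem in each hyperbolic $G_i$ one can decide whether such $t_i$ exist, and the set of all conjugators in $G_i$ is a coset $c_i\,Z_{G_i}(a_i)$ of the centraliser, which (since $a_i$ has infinite order in a torsion-free hyperbolic group) is infinite cyclic, say generated by $z_i$; moreover $c_i,z_i$ are computable. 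So the pair is conjugate in $P$ iff there exist $m,k\in\Z$ with $(c_1 z_1^{m},\,c_2 z_2^{k})\in P$, i.e.\ $p_1(c_1)\,p_1(z_1)^{m}=p_2(c_2)\,p_2(z_2)^{k}$ in $Q$. Setting $\alpha=p_2(c_2)^{-1}p_1(c_1)$, $\beta=p_1(z_1)$, $\gamma=p_2(z_2)$ in $Q$, the question becomes: do there exist $m,k$ with $\alpha\,\beta^{m}=\gamma^{k}$ in $Q$? This is a ``two-variable'' power-type problem, and the power problem as stated only answers the one-variable instances. The new idea — the ``perturbation'' technique advertised in the abstract — is needed precisely to cut this down to finitely many one-variable instances: one perturbs the data inside the hyperbolic group $G_i$ (replacing the conjugator representatives by suitable translates, without changing the conjugacy class, exploiting that a ``generic'' element of a hyperbolic group is not a proper power and has small centraliser) so that one of the two directions of freedom becomes rigid, after which solvability of the power problem in $Q$ finishes the job. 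I expect the main difficulty, and the place where torsion-freeness and the full strength of hyperbolicity (not merely solvable conjugacy problem) are used, to be in showing that this perturbation can be carried out algorithmically while controlling how $p_i$ sends the perturbed elements into $Q$; bounding the search then uses the recursiveness of $\delta_Q^c$ from $(3)$, so in practice one runs $(2)\Rightarrow(3)$ first and uses $\delta_Q^c$ as the ``clock'' for the algorithm.
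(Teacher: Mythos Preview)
Your plan for $(2)\Leftrightarrow(3)$ matches the paper, but you have the location of the main difficulty exactly backwards: the perturbation technique is needed for $(1)\Rightarrow(2)$, not for $(2)\Rightarrow(1)$.

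For $(2)\Rightarrow(1)$, your ``two-variable'' problem $\alpha\beta^{m}=\gamma^{k}$ in $Q$ dissolves without any perturbation. You overlooked that $(u_1,u_2)\in P$ and that $u_i=z_i^{e_i}$ for computable $e_i>0$ (Lemma~\ref{l:find-root}). Hence $(z_1^{e_1},z_2^{e_2})\in P$, so multiplying a putative conjugator $(z_1^{m}c_1,\,z_2^{k}c_2)$ by powers of $(u_1,u_2)$ reduces $k$ to the range $0\le k<e_2$. You are left with finitely many one-variable power questions in $Q$, namely whether $p_2(z_2^{k}c_2)\,p_1(c_1)^{-1}\in\langle p_1(z_1)\rangle$ for each such $k$. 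No clock via $\delta_Q^c$ and no genericity argument is needed here.

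Conversely, your Step~2 sketch for $(1)\Rightarrow(2)$ hides the real obstruction. The natural test---compare $(u_1,u_2)$ with $(w^{-1}u_1w,\,u_2)$ in $P$---gives conjugators $(z_1^{m}w,\,z_2^{k})$ with $\langle z_i\rangle=C_{G_i}(u_i)$, and membership in $P$ becomes $p_1(z_1)^{m}\cdot w = p_2(z_2)^{k}$ in $Q$. This is equivalent to $w\in\langle u\rangle$ \emph{only if} $p_i(z_i)=u$ in $Q$, i.e.\ only if each lift $u_i$ generates its own centraliser in $G_i$. If $u_i$ happens to be a proper power in $G_i$, then $p_i(z_i)$ need not lie in $\langle u\rangle$, and your test can return a false positive. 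The perturbation result (Proposition~\ref{p:no-roots}, resting on Proposition~\ref{t:no-power}) is exactly what lets you algorithmically replace $(u,u)$ by $(u_1,u_2)\in P$ with $C_{G_i}(u_i)=\langle u_i\rangle$ while preserving the image $u$ in $Q$, away from a finite exceptional set that is handled separately. The same issue arises already in extracting solvability of the word problem in $Q$ from $(1)$: one must first manufacture elements $g_i\in\ker p_i$ that are not proper powers, and this again uses the Power-Avoiding Lemma. Your proposal does not account for this, so as written the implication $(1)\Rightarrow(2)$ has a genuine gap.
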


It is important to note that although the power problem covers membership of both finite and infinite 
cyclic subgroups,  a solution to the power problem does not allow one to determine which elements of 
the group have finite order.  Indeed,  Collins \cite{collins} proved that the order problem, i.e. the problem of
deciding the orders of elements, can be arbitrarily harder.  This points to a subtle necessity in the definition of
the rel-cyclics Dehn function: it has to control the distortion of cyclic subgroups without determining whether those
subgroups are infinite. Collins also proved that the power problem can be arbitrarily
harder than the word problem (which is the membership problem for the trivial cyclic subgroup $\{1\}$).
In the light of these results,  the 
following corollary is a straightforward consequence of Theorem \ref{t:iff}.  

\begin{lettercor}\label{c:collins}
There exist torsion-free hyperbolic groups $G$ and finitely generated subdirect products $P<G\times G$
such that the membership problem for $P$ is solvable but the conjugacy problem for $P$ is not.   

There also exist examples where the membership and conjugacy problems for $P$ are solvable but
there is no algorithm to decide which elements of $G\times G$ have a non-zero power that lies in $P$.
\end{lettercor}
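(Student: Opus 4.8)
The plan is to deduce both statements from Theorem \ref{t:iff} and the remark preceding it, fed with the examples of Collins \cite{collins} and a Rips-style construction. First I would recall the standard input. Given any finitely presented group $Q$, the Rips construction (in the torsion-free form used in \cite{BBMS}) produces a torsion-free hyperbolic group $G$ together with a short exact sequence $1\to N\to G\xrightarrow{p}Q\to 1$ in which $N$ is finitely generated and the given generators of $Q$ lift, one by one, to generators of $G$; in particular $p$ has an effective section on generators, so from a word $w$ over the generators of $Q$ one can compute a word $\widehat w$ over the generators of $G$ with $p(\widehat w)=w$. Let $P<G\times G$ be the fibre product of $p$ with itself, $P=\{(g_1,g_2): p(g_1)=p(g_2)\}$. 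Then $P$ is subdirect (project to either factor and use $g\mapsto(g,g)$), one has $(G\times 1)\cap P = N\times 1$ and hence $(G\times 1)/\big((G\times 1)\cap P\big)\cong Q$, and $P$ is finitely generated because $Q$ is finitely presented, by the theorem of Bridson--Miller quoted above \cite{BM}.

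For the first statement I would invoke Collins' theorem \cite{collins} that there is a finitely presented group $Q$ with solvable word problem and unsolvable power problem. Applying the construction to this $Q$ yields $G$ and $P$: by the remark preceding Theorem \ref{t:iff} the membership problem for $P$ is solvable, since the word problem in $Q$ is, while by Theorem \ref{t:iff} the conjugacy problem for $P$ is unsolvable, since the power problem in $Q$ is not.

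For the second statement I would instead take, again from Collins \cite{collins}, a finitely presented group $Q$ with solvable power problem but unsolvable order problem. With $G,P$ as above, the power problem in $Q$ is solvable, hence so is the word problem, so both the membership and the conjugacy problems for $P$ are solvable (the remark, and Theorem \ref{t:iff}). It remains to see that there is no algorithm deciding which $(g_1,g_2)\in G\times G$ have a non-zero power in $P$. Since $(g_1,g_2)^n=(g_1^n,g_2^n)$ lies in $P$ precisely when $p(g_1)^n=p(g_2)^n$ in $Q$, specialising $g_2=1$ shows that such an algorithm, applied to $(\widehat w,1)$, would decide whether the element of $Q$ represented by $w$ satisfies $w^n=1$ for some $n\neq 0$; combined with the solvability of the word problem in $Q$ this would solve the order problem in $Q$, a contradiction.

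The content of the corollary is carried by the cited black boxes, above all the equivalence of (1) and (2) in Theorem \ref{t:iff}. The only point that needs care is the final reduction: one must check that ``having a non-zero power in $P$'' encodes the \emph{order} problem of $Q$ rather than merely its (solvable) power problem, which is why $g_2$ is set equal to $1$ above and why it matters that Collins separates the order problem from the power problem. I do not expect any further obstacle.
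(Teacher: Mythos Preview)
Your proposal is correct and follows exactly the route the paper intends: the text surrounding the corollary explicitly cites Collins' separation of the word, power, and order problems and declares the corollary a ``straightforward consequence'' of Theorem~\ref{t:iff}, without spelling out the deduction. Your write-up supplies the expected details---the Rips-type input from \cite{BBMS} to realise an arbitrary finitely presented $Q$ as $G/(G\cap P)$, the appeal to the membership criterion (word problem in $Q$) and to Theorem~\ref{t:iff} (power problem in $Q$), and the reduction of the ``non-zero power in $P$'' question to the finite-order question in $Q$ via $(\widehat w,1)$---and these are precisely the steps the paper is gesturing at.
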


When trying to understand conjugacy problems, one is inevitably drawn into studying centralisers, because
if $\gamma$ conjugates $u$ to $v$ in a group $G$, then the set of all solutions to the equation $x^{-1}ux=v$
is the coset $C_G(u)\gamma$.  In torsion-free hyperbolic groups,  centralisers of non-trivial elements are cyclic. In our proof
of Theorem \ref{t:iff},  certain arguments only work smoothly when the element of $G$ under consideration  generates its
own centraliser. The following proposition will be useful in this regard (and I imagine it may be of
similar use in other settings).  A related (non-algorithmic) result is contained in Minasyan's work on conjugacy separability 
for subdirect products of hyperbolic groups \cite{minasyan}.
We will deduce Proposition \ref{p:no-roots}  from a more geometric result, Proposition \ref{t:no-power},  which in turn is
motivated by the Lyndon-Sch\"{u}tzenburger Theorem \cite{LS}, which states that in a non-abelian free group, if
$a,b,c$ do not pairwise commute, then  $a^pb^qc^r\neq 1$ for all powers  $p,q,r\ge 2$.

 \def\EE{\mathcal{E}}

\begin{restatable}{letterprop}{otherletterprop}\label{p:no-roots} 
Given a torsion-free hyperbolic group $G=\<X\mid R\>$,  a group  
$Q$ with a solvable word problem, and a non-injective epimorphism $p:G\onto Q$, there is
a finite set $\EE\subset F(X)$ and an algorithm that, given a word $w\in F(X)$,  
will output a word   $w'\in F(X)$ with $p(w)=p(w')$ in $Q$ such that $w'\in\EE$ or else
the centraliser $C_G(w')$ is $\<w'\>$.
\end{restatable}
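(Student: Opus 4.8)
The plan is to reduce the statement to the geometric fact supplied by Proposition \ref{t:no-power}: that an element of a torsion-free hyperbolic group can be perturbed, within a prescribed coset of $\ker p$, so as to become primitive (not a proper power). Two standard facts about torsion-free hyperbolic groups will be used throughout. First, the centraliser of a non-trivial element of $G=\langle X\mid R\rangle$ is infinite cyclic, and there is an algorithm that, given a word, decides whether it represents $1$ and, if not, computes the generator of the centraliser of the element it represents (its root); in particular one can decide whether a word represents a proper power, and for $g\neq 1$ one has $C_G(g)=\langle g\rangle$ precisely when $g$ is not a proper power. Second, since $Q$ has solvable word problem and $p$ is prescribed by its effect on $X$, one can decide for any $v\in F(X)$ whether $p(v)=1$; hence the set of words representing elements of $\ker p$ is recursively enumerable, and right-multiplying $w$ by such a word does not change $p(w)$.

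The set $\EE$ and the algorithm are allowed to depend on $G$, $Q$ and $p$, so we may fix the case. If $G$ is virtually cyclic then, being torsion-free and (since $\ker p\neq 1$) non-trivial, $G\cong\Z$, and since $p$ is a non-injective epimorphism, $Q\cong\Z/n$ with $n\ge 2$; we choose for each residue $r\in\Z/n$ a word $e_r\in F(X)$ representing an integer $\equiv r\pmod n$, set $\EE=\{e_r:r\in\Z/n\}$, and let the algorithm compute the integer $a$ represented by $w$ and output $e_{a\bmod n}\in\EE$, which has image $p(w)$. If $G$ is not virtually cyclic we take $\EE=\varnothing$, and the algorithm is: given $w$, decide whether $w=1$ or $w$ is a proper power; if not, output $w'=w$; otherwise enumerate the words $v_1,v_2,\dots$ with $p(v_i)=1$, test successively whether $wv_i$ is non-trivial and not a proper power, and output $w'=wv_i$ at the first success. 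Then $p(w')=p(w)$ always, and $C_G(w')=\langle w'\rangle$ whenever the output comes from this search; correctness therefore reduces to proving that the search terminates, equivalently that the coset $w\ker p$ contains a primitive element whenever $w=1$ or $w$ is a proper power.

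So let $G$ be non-virtually-cyclic and write $N=\ker p$. Since $p$ is non-injective, $N\neq 1$, and as $G$ is torsion-free $N$ is infinite; being a non-trivial normal subgroup, its limit set is a non-empty closed $G$-invariant subset of $\partial G$, hence all of $\partial G$, so $N$ is non-elementary and in particular lies in no cyclic subgroup of $G$. If $w\neq 1$, then $C_G(w)$ is cyclic, so there is $n\in N\setminus C_G(w)$, an element not commuting with $w$; if $w=1$, pick $n_0\in N\setminus\{1\}$ and, as $C_G(n_0)$ is cyclic, $n\in N\setminus C_G(n_0)$. Now Proposition \ref{t:no-power}, applied to $w$ and $n$ (respectively to $n_0$ and $n$) --- after, should its hypotheses demand it, replacing $n$ by a suitable pair of non-commuting elements of $N$, which stay in $N$ because $N$ is normal, to secure the required independence of axes --- shows that $wn^{k}$ (respectively $n_0n^{k}$) is not a proper power for all sufficiently large $k$. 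Since $n^{k}\in N$, such an element lies in $wN$ and is primitive, so the search terminates; this completes the reduction.

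The entire weight of the proof thus rests on Proposition \ref{t:no-power}, the perturbation technique advertised in the abstract: its role here is to guarantee that $wn^{k}$ carries a bounded ``defect'' in its axis, recurring exactly once per period of length $\tau(wn^{k})$, which no translation of length at most $\tfrac12\tau(wn^{k})$ can respect, so that $wn^{k}$ cannot be a proper power. Everything else --- isolating the virtually cyclic case and absorbing it into a finite $\EE$, arranging the axis-independence that Proposition \ref{t:no-power} requires while keeping the perturbing factor inside $\ker p$, and rendering the procedure effective by enumerating $\ker p$ via the solvable word problem in $Q$ and testing primitivity via the solvable root problem in $G$ --- is routine by comparison. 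The one point that genuinely needs care is that the exceptional words really do fit into a finite set $\EE$; this is why we treat $G\cong\Z$ separately and establish, in every other case, that the coset $w\ker p$ already contains a primitive element.
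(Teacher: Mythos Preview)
Your invocation of Proposition~\ref{t:no-power} does not match what that proposition actually asserts, and this is a genuine gap. Proposition~\ref{t:no-power} fixes a single element $a\in G$ of infinite order and produces constants $K,N_0$ depending on $a$; it then says that $ha^{K}$ is not a proper power, for that \emph{specific} exponent $K$, provided $h$ satisfies the metric hypothesis $d_G(1,h)=d_{\Q_a}(1,\bar h)>N_0$, i.e.\ $h$ is a shortest representative of its $\langle\!\langle a\rangle\!\rangle$-coset and is long. You instead use it as if it said: for $w$ and $n$ that do not commute, $wn^{k}$ is primitive for all sufficiently large $k$. That is a different statement; in Proposition~\ref{t:no-power} the perturbing exponent is fixed and the base element varies subject to a coset-minimality condition, whereas in your reading the base is fixed and the exponent varies with no condition at all. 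Your hedge about ``replacing $n$ by a suitable pair of non-commuting elements of $N$ to secure the required independence of axes'' confirms the mismatch: there is no independence-of-axes hypothesis in Proposition~\ref{t:no-power}, and no conclusion about large powers of $n$. Consequently your termination argument, and hence your claim that $\EE=\varnothing$ suffices for non-virtually-cyclic $G$, is unjustified.

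The paper's proof is short once Proposition~\ref{t:no-power} is in hand, but it uses that proposition in the intended way, and the finite set $\EE$ is not an artefact of the cyclic case. One fixes a single $a\in\ker p$ and the associated constants $K,N_0$. Given $w$, the solvable word problem in $Q$ is used to compute a word $w_0$ that is a geodesic in $\cay(Q,X)$ with $p(w_0)=p(w)$. Because $\cay(G,X)\to\Q_a\to\cay(Q,X)$ are label-preserving and distance-nonincreasing, a $Q$-geodesic word is automatically a $\Q_a$-geodesic word, so the element $h$ represented by $w_0$ satisfies $d_G(1,h)=d_{\Q_a}(1,\bar h)$. If $|w_0|>N_0$, Proposition~\ref{t:no-power} applies directly and one outputs $w':=w_0\tilde a^{K}$; if $|w_0|\le N_0$, one outputs $w':=w_0\in\EE$, where $\EE$ is the finite set of $Q$-geodesic words of length at most $N_0$. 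In particular, the pre-processing step that forces the metric hypothesis of Proposition~\ref{t:no-power} is exactly where the solvable word problem in $Q$ is used, and the finite set $\EE$ is what absorbs the cosets $w\ker p$ whose image in $Q$ is short --- your argument gives no reason why those cosets should contain primitives.
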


Throughout this article, it will be helpful to take the formal viewpoint that a choice of generators $X$ for a 
group $G$ is an epimorphism  $F(X)\onto G$ from the free group on $X$.  
To prevent a clutter of notation,  
we avoid giving this epimorphism a name: instead,  we use the phrase ``in $G$" when there is a need
to specify that words in the generators are being considered as products of the generators
in $G$ rather than elements of $F(X)$.   Similarly,  we write ``$u=v$ in $G$" if $uv^{-1}\in\ker(F(X)\onto G)$, and we write $C_G(w)$
to denote the centraliser in $G$ of the image of $w\in F(X)$.   
An advantage of this convention is that, given
an epimorphism $p:G\onto Q$,  one can regard $X$ as a generating set for $Q$ by composing  $F(X)\onto G$ 
with $p:G\onto Q$.   
From a geometric viewpoint, this is useful because it allows us to extend $p:G\onto Q$ to a label-preserving local isometry 
of Cayley graphs $\cay(G,X)\onto \cay(Q,X)$. In the associated word metrics,  $d_Q(p(h),p(g)) \le d_G(h,g)$ for 
all $h,g\in G$.   

The elements $h\in G$ in the following proposition are those that are represented by words $w\in F(X)$
that are geodesics in $\cay(Q_a,X)$.

\begin{restatable}{letterprop}{firstletterprop}{\rm{[Power-Avoiding Lemma]}}\label{t:no-power}  
Let $G$ be  a hyperbolic group with finite generating set $X$ and consider $X$
as a generating set for each of the quotients $Q_a:=G/\<\!\< a\>\!\>$.   Then, 
for each  element of infinite order $a\in G$,  there exist constants $K,N$ with the following property: 
if   $d_G(1,h) = d_{Q_a}(\<\!\< a\>\!\>,h\<\!\< a\>\!\>) >N$, then $ha^K\in G$ is not a proper power.
\end{restatable}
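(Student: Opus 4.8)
The plan is to reformulate the hypothesis geometrically — a geodesic for $h$ is not allowed to shadow any translate of the axis of $a$ for more than a bounded distance — and then to show that, for a suitably large fixed $K$, the periodicity of a putative identity $ha^K=u^n$ would force $h$ to violate this. Let $\delta$ be a hyperbolicity constant for $\cay(G,X)$, and let $\gamma_a$ be the bi‑infinite $\<a\>$‑invariant path obtained by concatenating the translates $a^m\sigma$ of a fixed geodesic word $\sigma$ for $a$; since $a$ has infinite order its (stable) translation length $|a|_\infty$ is $\ge\varepsilon_0>0$, the uniform lower bound on translation lengths of infinite‑order elements of $G$, so $\gamma_a$ and all its translates $g\gamma_a$ are quasigeodesic lines with constants depending only on $\delta$ and $|a|$ and with period of length $|a|$. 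The first step is a \emph{Key Lemma}: there are constants $B=B(\delta,|a|)$ and $\rho=\rho(\delta,|a|)$ such that if $w\in F(X)$ is geodesic in $\cay(G,X)$ and its image $\overline w$ is geodesic in $\cay(\Q_a,X)$, then for every $g\in G$ the path $w$ meets the $\rho$‑neighbourhood of $g\gamma_a$ in a set of diameter $\le B$. Indeed, given two such points $p,q$ of $w$, replace them by nearby ``integer'' points $ga^m,ga^{m'}$ of $g\gamma_a$; then $p^{-1}q$ is a word of length $\le\rho+|a|$ times $a^{m'-m}$ times another such word, so $d_{\Q_a}(\overline p,\overline q)\le 2(\rho+|a|)=:B$, whereas the subpath of $w$ between $p$ and $q$ has length $d_G(p,q)$, which would exceed $B$ — contradicting geodesy of $\overline w$ — unless $d_G(p,q)\le B$.

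Now fix $K$ with $|a^K|>10(B+\rho)$ and then a large $N$, and suppose $d_G(1,h)=d_{\Q_a}(\<\!\< a\>\!\>,h\<\!\< a\>\!\>)>N$ while $ha^K=u^n$ with $n\ge 2$. Let $w=[1,h]$ be a geodesic; by hypothesis the Key Lemma applies to it. In the geodesic triangle $(1,h,ha^K)$, the side $[h,ha^K]$ lies in a bounded neighbourhood of $h\gamma_a$, and the Gromov product $(1\mid ha^K)_h$ — the cancellation in the product $h\cdot a^K$ — equals the length over which $[h,1]\subset w$ and $[h,ha^K]$ fellow‑travel near $h$; by the Key Lemma (with $g=h$) this is $\le B$, so
\[
|ha^K|\ \ge\ |h|+|a^K|-2B .
\]
Feeding this back, $(1\mid h)_{ha^K}=\tfrac12(|ha^K|+|a^K|-|h|)\ge|a^K|-B$, so $[1,ha^K]$ stays in a bounded neighbourhood of $h\gamma_a$ for a length $\ge|a^K|-B$ back from its endpoint $ha^K$; and $(h\mid ha^K)_1\ge|h|-B$, so $w$ agrees (up to $2\delta$) with $[1,ha^K]$ outside the last $O(|a^K|)$ of the latter.

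To exploit $ha^K=u^n$, write $u=\eta v\eta^{-1}$ with $v$ cyclically reduced and $\eta$ of minimal length; then $\eta\,v^{n}\,\eta^{-1}$ is freely reduced and, as $v$ is cyclically reduced with translation length $\ge\varepsilon_0$, the word $v^{n}$ is a quasigeodesic with constants depending only on $\delta$. If $u$ is commensurable with $a$ and $n|u|_\infty$ is large, then $ha^K=u^{n}$ lies within bounded distance of a conjugate of a high power of $a$, so $[1,ha^K]$ shadows some $g\gamma_a$ for a length $\approx n|u|_\infty\gg B$; since $w$ shares all but the last $O(|a^K|)$ of $[1,ha^K]$, it too shadows $g\gamma_a$ for length $\gg B$, contradicting the Key Lemma. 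In the remaining situations, compare the word $W=\eta\,v^{n}\,\eta^{-1}\,a^{-K}$ representing $h$, of length $2|\eta|+n|v|+|a^K|$, with the bound $|h|\le|ha^K|-|a^K|+2B\le 2|\eta|+n|v|-|a^K|+2B$ coming from the displayed inequality: the resulting ``deficit'' of at least $2|a^K|-2B$ cannot be lost inside the geodesic words $\eta,\eta^{-1}$, nor (beyond a uniform amount governed by its quasigeodesic constants) inside $v^{n}$, nor — by minimality of $\eta$ — at the junction $v^{n}\eta^{-1}$, so almost all of it is lost at the junction $\eta^{-1}a^{-K}$; that is, $\eta$ begins with a word cancelling all but $O(B)$ of $a^{-K}$. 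Hence $h=\eta\,v^{n}\,\eta^{-1}\,a^{-K}$ itself begins with a word lying in a bounded neighbourhood of an initial arc of $\gamma_a$ of length $\approx|a^K|$, so $w$ does too, and the Key Lemma is violated once $|a^K|>B+\rho$. (If $ha^K$ has finite order it is a long conjugate of a bounded‑length element twisted by $a^{-K}$, and the last argument — using the uniform bound on the orders of torsion elements of $G$ — applies verbatim; when $G$ is torsion‑free, the case needed for Proposition~\ref{p:no-roots}, this possibility does not arise.)

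\textbf{Main obstacle.} The delicate point is uniformity in the last step. When the exponent $n$ is small, the cyclically reduced root $v$ and the conjugator $\eta$ can both have length comparable to $|h|$, so one cannot pass to a bounded configuration; one must separate the contributions to the deficit of $W$ coming from $\eta,\eta^{-1}$ (none, being geodesic), from $v^{n}$ (controlled \emph{only} by the uniform quasigeodesic constants for powers of cyclically reduced words, hence by $\varepsilon_0$), and from the three junctions, carrying the $O(\delta)$ and $O(B)$ errors through the interplay between free reduction and Dehn's algorithm. Establishing that the deficit genuinely localizes at the $\eta^{-1}a^{-K}$ junction — together with the separate treatment required when $n$ is itself large (where $v$ is short, $[1,u^{n}]$ has a long periodic core and the shadowing argument applies, or else a short computation in $\Q_a$ yields the contradiction) — is where the real work lies; the enabling facts throughout are the Key Lemma and the uniform lower bound $\varepsilon_0$ on translation lengths.
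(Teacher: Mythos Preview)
Your Key Lemma is exactly the engine of the paper's proof as well: the hypothesis $d_G(1,h)=d_{\Q_a}(1,\bar h)$ says precisely that no subarc of the geodesic $[1,h]$ can fellow-travel a translate of the $a$-axis for more than a bounded length, and the paper exploits this by producing two arcs in $\cay(G,X)$ carrying the \emph{same label} in $F(X)$, one close to the $a^K$-arc (hence of small $\Q_a$-diameter) and one close to $[1,h]$ (hence of large $\Q_a$-diameter). Your commensurability case and the Gromov-product inequality $|ha^K|\ge|h|+|a^K|-2B$ are also correct, and your treatment of the case where $|\eta|$ is large is close in spirit to the paper's Case~1.

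The genuine gap is in your deficit-localisation step when $|\eta|$ is short. The cancellation at the junction $\eta^{-1}\!\cdot a^{-K}$ is bounded by $\min(|\eta|,|a^K|)$, so if $|\eta|\ll|a^K|$ it cannot absorb a deficit of order $2|a^K|$; and this regime does occur. Indeed, once $u$ is not commensurable with $a$, a Fine--Wilf type argument only forces $|v|\gtrsim|a^K|$, so one can have $|\eta|$ bounded, $|v|\approx|a^K|$, and $n$ large (of order $N/k$). In that case the $a^{-K}$ segment backtracks into the $v^n$ block rather than against $\eta^{-1}$, the reduced word for $h$ is essentially a long prefix of $\eta v^n$ lying along the axis of $v$, and your Key Lemma is not violated. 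The paper closes this gap (its Cases~2 and~3 for odd $p$, and the entire $p=2$ analysis) by a mechanism your sketch does not contain: it uses the \emph{repetition} of the cyclically short word $\theta$ (your $v$) inside $\theta^p$. A terminal subword $\zeta$ of $\theta$ of length $\approx k/50$ labels the end of the \emph{last} copy of $\theta$ (close to the $a^K$-arc, so small $\Q_a$-diameter) and simultaneously the end of the \emph{first} copy of $\theta$ (close to an initial portion of $[1,h]$, so large $\Q_a$-diameter); equal labels force equal $\Q_a$-diameters, giving the contradiction. Your ``short computation in $\Q_a$'' would have to be precisely this repetition trick, and it is the missing idea.
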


The lengthy  proof of Proposition \ref{t:no-power} is  presented in Section \ref{s:no-roots}. I hope that this
result may be of independent interest, but the reader who is willing to take Propositions \ref{p:no-roots} and
\ref{t:no-power}  on faith
can move directly from the preliminaries in Sections \ref{s:dehn} and \ref{s:prelim} to the proof of Theorem \ref{t:iff}
in Section \ref{s:proof}.

\noindent{{\bf{Acknowledgements.}} 
I am grateful to the University of Oxford for the sabbatical leave that enabled me to finish this article,
and to Stanford University for hosting me during this sabbatical. I am also grateful to Tim Riley for extensive and fruitful
conversations about the complexity of conjugacy problems,   to Ashot Minasyan and Denis Osin for 
helpful correspondence regarding analogues of the Lyndon-Sch\"{u}tzenburger Theorem in hyperbolic groups, and to
the anonymous referee for their careful reading and helpful comments.
My special thanks go to Chuck Miller for the many insights that I gained
from him during our long collaboration on fibre products and decision problems.

\section{The rel-cyclics Dehn function}\label{s:dehn}

Let $\mathcal{P} \equiv \< X \mid R\>$ be a finite presentation of a group $Q$. By definition,
a word $w$ in the free group $F(X)$ represents the identity in $Q$ if and only if 
there is an equality in $F(X)$ 
\begin{equation}\label{w:equ}
w = \prod_{i=1}^M \theta_i^{-1} r_i \theta_i
\end{equation}
with $r_i\in R\cup R^{-1}$ and $\theta_i\in F(X)$.  
The number $M$ of factors in the product on the righthand side is defined to be 
the {\em{area}} of the product.
This terminology is motivated by the   correspondence with diagrams
that comes from van Kampen's Lemma \cite{mrb:bfs}.

One defines
${\rm{Area}}(w)$ to be the least area among all products of this form for $w$.
The {\em{Dehn function}} of $\mathcal{P}$ is the function $\delta: \N \to \N$ defined by
$$
\delta(n) = \max \{{\rm{Area}}(w) \mid w=_Q 1 \ {\rm{and}} \ |w|\le n\}.
$$
Up to the standard coarse bi-Lipschitz equivalence relation $\simeq$ of geometric group theory,
this function is independent of the chosen presentation and it is common to abuse notation 
by adding a subscript, viz.~$\delta_Q(n)$.

The {\em noise} of the product  on the right of (\ref{w:equ}) is defined to be 
$\sum_{i=0}^{M} |\theta_{i}\theta_{i+1}^{-1}|$, with the convention that 
$\theta_0$ and $\theta_{M+1}$ are the empty word.
An analysis of the standard proof of van Kampen's Lemma  
yields the {\em Bounded Noise Lemma}, an observation (and terminology) that is
due to the authors of \cite{BBMS}; see \cite{dison}, p.18
for an explicit proof.

\begin{lemma}[Bounded Noise Lemma]\label{l:BNL}
If ${\rm{Area}}(w) = M$, then there is a product of the form (\ref{w:equ})
with area $M$ and noise at most $ML + |w|$, where $L$ is the length of the longest relator in $R$.
\end{lemma}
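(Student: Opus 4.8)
The plan is to pass to van Kampen diagrams and read a \emph{new} product off a minimal diagram, arranging the conjugators $\theta_i$ so that the noise is forced down by the (bounded) combinatorics of the diagram rather than by the arbitrary conjugators appearing in the original product. By van Kampen's Lemma, a product (\ref{w:equ}) realising $\Area(w)=M$ corresponds to a van Kampen diagram $D$ over $\mathcal P$ with exactly $M$ two-cells and boundary cycle labelled $w$ read from a basepoint $*$; each two-cell has perimeter $\le L$. I would build $D$ up one two-cell at a time along an onion (shelling) order $\pi_1,\dots,\pi_M$, so that $D_k:=\pi_1\cup\cdots\cup\pi_k$ is a disc with $*\in\partial D_k$ for every $k$. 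Writing the boundary cycle of $D_{k-1}$ from $*$ as $A_kS_kB_k$, where $S_k$ is the sub-arc along which $\pi_k$ is glued and $\partial\pi_k=S_kT_k^{-1}$, the boundary cycle of $D_k$ is $A_kT_kB_k$, and a one-line computation gives
\[
(\partial D_{k-1})^{-1}\,(\partial D_k)\;=\;(S_kB_k)^{-1}\,(\partial\pi_k)^{-1}\,(S_kB_k)\qquad\text{in }F(X).
\]
Setting $\theta_k:=S_kB_k$ (corrected by a prefix of $\partial\pi_k$ of length $<L$ so that the relator that appears is literally in $R\cup R^{-1}$) and telescoping over $k$ yields $w=\prod_{k=1}^M\theta_k^{-1}r_k\theta_k$ in $F(X)$, a product of the required area $M$.

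The crux is to choose the shelling order so that the tail words $B_k$ are essentially nested: $\pi_{k+1}$ should be glued onto $\partial D_k=A_kT_kB_k$ inside the $B_k$-part, as close as possible to its initial vertex, so that $B_{k+1}$ is a suffix of $B_k$. Then $\theta_{k+1}$ is a suffix of $\theta_k$ up to prefixes of length $\le L$, and $\theta_k\theta_{k+1}^{-1}$ freely reduces: the long tails cancel and what remains is short. Keeping track of the residue, one finds that the contribution of $\pi_k$ to the noise is controlled by its perimeter together with the length of the stretch of $\partial D_k$ one must slide past before finding the next accretable cell; summing, the perimeters contribute at most $ML$ in total and the sliding stretches, being essentially disjoint sub-arcs of $\partial D$, contribute at most $|w|$, while $|\theta_1|\le L$ and $|\theta_M|\le|\partial D_{M-1}|\le|w|+L$ are the leftover boundary terms. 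This is what produces the bound $ML+|w|$.

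The step I expect to be the real obstacle is making this last accounting honest: showing that a shelling order with the nesting property exists (at a given stage there may be no accretable two-cell near the initial vertex of $B_k$, forcing one either to glue near the terminal vertex of the prefix $A_k$ and use a symmetric ``prefix'' relation, or, in degenerate configurations, to contend with pinch points and vertices the accretion must revisit), and organising the edge counts so that every perimeter and every boundary edge is charged exactly once rather than a bounded number of times --- only the latter gives the clean constant $1$ rather than an $O(ML+|w|)$ estimate. One also has to handle the preliminary point that the minimal diagram has boundary label the given word $w$, not a reduced or cyclic reduction of it. A complete execution along these lines is carried out in \cite{dison}, p.\,18.
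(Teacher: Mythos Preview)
The paper does not actually prove this lemma: it states the result, attributes it to \cite{BBMS}, and refers the reader to \cite{dison}, p.\,18 for an explicit proof --- the very same reference you invoke at the end of your sketch. Your outline (read a new product off a minimal van Kampen diagram by shelling it one $2$-cell at a time, choosing the accretion order so that successive conjugators share long tails, and charging the residual noise to face perimeters and disjoint boundary arcs) is the standard mechanism behind this result, and your telescoping identity is correct. So there is nothing to compare against: you have supplied more than the paper does, and what you supply is on the right track, with the genuine bookkeeping issues (existence of a suitable shelling, the sharp constant, unreduced boundary words) correctly flagged and deferred to the cited source.
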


\begin{remark}\label{r:solve}
With the Bounded Noise Lemma in hand, it is clear that the word problem is solvable in $Q$ if and only
if $\delta_Q(n)$ is a recursive function: on the one hand,  knowing that $w=1$ in $Q$, one can naively
search for equalities of the form (\ref{w:equ}) until one is found,  giving a bound on 
${\rm{Area}}(w)$ that can be sharpened to an exact value by checking the finitely many products with
fewer factors that satisfy the Bounded Noise Lemma -- thus $\delta_Q(n)$ can be computed; conversely, 
given a word  $w$ of length $n$,  if $\delta_Q(n)$ can be computed, then one can determine whether $w=1$ in $Q$
by testing the validity of  (\ref{w:equ}) for each product with $M\le \delta_Q(n)$ that satisfies the Bounded Noise Lemma, and there are only finitely many such equalities that need to be tested.

We shall employ a similar argument to see that the recursiveness of the following function is equivalent to the
solvability of the power problem in $Q$.
\end{remark}

\begin{definition}\label{d:d^c}
The {\em rel-cyclics} Dehn function of a finitely presented group $Q=\<X\mid R\>$ is 
$$ 
\d^{c}(n) : = \max_{w,u} \{ {\rm{Area}}(w\,u^{-p}) +|pn| \colon |w|+|u|\le n,\ w=_Q u^{p},\
|p|\le  o(u)/2\},
$$
where $o(u)\in\N\cup\{\infty\}$ is the order of $u$ in $\G$.  
\end{definition}

\begin{remarks} 
(1) The condition $|p|\le  o(u)/2$
is equivalent to requiring that $|p|$ is the least non-negative  integer such that  $w=u^{\pm p}$ in $Q$.
  
(2) We allow $p=0$, so $ \d(n) \le \d^{c}(n)$.

(3) The standard proof that the Dehn function of a finitely presented group is 
independent of the presentation, up to $\simeq$ equivalence,  shows that $\delta^c(n)$ is as well.  
With this understanding,  we write $\d_Q^{c}(n)$ when we want to emphasize which group we are considering.

(3)  If we replaced the condition  $|p|\le o(u)/2$ with $|p|\le o(u)$ then  the resulting variant of $\d^c(n)$
would control the orders of $u\in Q$ with $|u|\le n$; in particular the order problem in $Q$ would be solvable
when this function was recursive.  The work of Collins \cite{collins} and
Corollary \ref{c:collins} tell us that we must avoid this in the context of Theorem \ref{t:iff}.  
\end{remarks}  

The following proposition establishes the equivalence of the second and third conditions in Theorem \ref{t:iff}.

\begin{proposition}\label{ThmA:2iff3}
The power problem in a finitely presented group $Q$ is solvable if and only if the rel-cyclics Dehn function 
$\delta_Q^c(n)$ is recursive.
\end{proposition}

\begin{proof} First we assume that the power problem in $Q$ is solvable.  
For each pair of words $u,v$ of length at most $n$ in the generators of $Q$,  we use the solution to the
power problem to decide if $v$ lies in the cyclic subgroup of $Q$ generated by $u$. If it does not, we discard this pair.
If it does, then we use the solution to the word problem in $Q$ (i.e.~the membership problem for the cyclic group $\{1\}$)
to test the words $vu^i$, with $|i|$ increasing,  until we identify
the least non-negative integer $|p|$ such that $v=u^p$ or $v=u^{-p}$ in $Q$; replacing $p$ by $-p$ if necessary,
we may assume  $v=u^{p}$.
 Note that if $u$ has finite order then the
 minimality of $|p|$ implies $|p|\le o(u)/2$.  With $p$ in hand,  we can calculate  ${\rm{Area}}_Q(vu^{-p})$
 by searching for equalities of the form (\ref{w:equ})  with $w=vu^{-p}$, restricting our attention to those
 that satisfy
 the Bounded Noise Lemma (Lemma \ref{l:BNL}): first, a naive search will identify one valid equality, giving an
 upper bound $M$ on ${\rm{Area}}_Q(vu^{-p})$, and the exact value of ${\rm{Area}}_Q(vu^{-p})$ can then be
 calculated by checking the validity of (\ref{w:equ}) for the finitely many products with at most $M$ factors that satisfy the
 Bounded Noise Lemma.  Having calculated ${\rm{Area}}_Q(vu^{-p})$ for each
 pair $u,v$ and $|p|$  minimal, we obtain  $\delta_Q^c(n)$.   Thus $\delta_Q^c(n)$ is a recursive function.
 
\medskip
Conversely,  if the rel-cyclics Dehn function is recursive,  then, given  words $u,v$, we take an integer $n$
that is greater than the length of each and compute 
 $\delta_Q^c(n)$.  If $v=u^p$ for some $p\in\Z$, then for the minimal $|p|$ we have $|p|n\le \delta_Q^c(n)$.
Moreover,  when $|p|$ is minimal,  ${\rm{Area}}(vu^{-p})\le \delta_Q^c(n)$. Thus, in order to determine whether
$v\in\< u \>$,  we  only need to check for each $p$ with $|p|\le \delta_Q^c(n)$ whether there is a valid
free quality as in (\ref{w:equ}), with $w=vu^{-p}$,  running over only products with $M\le \delta_Q^c(n)$ factors
that satisfy the conclusion of the Bounded Noise Lemma. This is a finite check. 
\end{proof}

\section{Some basic hyperbolic facts}\label{s:prelim}

The reader is assumed to be familiar with the rudiments of Gromov's theory of hyperbolic groups \cite{gromov}, but it is nevertheless useful to gather  the following basic facts and settle on names for various constants. 
These are mostly standard facts from \cite{BH}, pp.~402--406,  with the exception of Lemma \ref{l:constants}(3). As is standard,
we say that $G$ is $\delta$-hyperbolic if each side of every geodesic triangle in the Cayley graph $\cay(G,X)$
is contained in the $\delta$-neighbourhood of the union of the other two sides. 

\begin{lemma}\label{l:constants}
For any $\delta$-hyperbolic group $G$ with finite generating set $X$,  and for all $\l\ge 1,\, \e\ge 0$,
there are positive constants $E, C$ and $\mu$ such that  the following statements hold.
\begin{enumerate}
\item  The Hausdorff distance between  any $(\l,\e)$-quasigeodesic in $\cay(G,X)$
and any geodesic with the same endpoints is at most $E$.
\item Every  quadrilateral in $\cay(G,X)$ with $(\l,\e)$-quasigeodesic sides is $C$-slim, 
meaning that each side is contained in the $C$-neighbourhood of the union of the other 3 sides.
\item For every $g\in G$ of infinite order,  $d(1,g^i) \le \mu\,  d(1,g^p)$ for all integers $0<i<p$.
\end{enumerate} 
\end{lemma}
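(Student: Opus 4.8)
Statements (1) and (2) are entirely standard, and I would simply invoke \cite{BH}: (1) is the Stability (Morse) Lemma for $(\l,\e)$-quasigeodesics in a hyperbolic space, and (2) follows from it by splitting a quadrilateral into two triangles and combining the $\delta$-thinness of geodesic triangles with (1). So the content is (3), for which the plan is as follows.

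Fix $g\in G$ of infinite order and let $\tau=\tau(g)$ be its stable translation length. Since $m\mapsto d(1,g^m)$ is subadditive, $\tau:=\lim_m d(1,g^m)/m$ exists, equals $\inf_{m\ge1}d(1,g^m)/m$, and is positive; in particular $d(1,g^n)\ge n\tau$ for all $n\ge1$, and it is a standard fact (see \cite{gromov}) that there is $\tau_0=\tau_0(G,X)>0$ with $\tau(g)\ge\tau_0$ for every infinite-order $g$. Choose a vertex $x_0$ (nearly) minimising the displacement $x\mapsto d(x,gx)$; then $T:=d(x_0,gx_0)$ satisfies $T\le\tau+C_0\delta$ for a universal $C_0$, and $g^n$ maps the $g$-invariant path $\mathcal{A}_g:=\bigcup_{m\in\Z}[g^mx_0,g^{m+1}x_0]$ onto itself, shifting it a distance $|n|T$ along its own arclength. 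Put $D:=d(1,\mathcal{A}_g)$; this may depend on $g$, since the ``conjugator length'' of $g$ (e.g.\ the $N$ in $g=a^{N}wa^{-N}$) can be arbitrarily large. The key estimate I would establish is
\[
D-c\ \le\ d(1,g^n)\ \le\ 2D+n\tau+c\,n\qquad(n\ge1),
\]
with $c=c(G,X)$, the left-hand inequality holding for every $n\ne0$.

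The upper bound is the easy half: concatenating a geodesic from $1$ to a nearest point $y_0\in\mathcal{A}_g$, the sub-arc of $\mathcal{A}_g$ from $y_0$ to $g^ny_0$ (arclength $|n|T$), and a geodesic from $g^ny_0$ to $g^n$ gives a path of length $2D+|n|T\le 2D+n\tau+C_0\delta\,n$. For the lower bound I would use two standard facts, both uniform once $\tau(g^n)\ge\tau_0$ is in hand: that $\mathcal{A}_g$ lies within a bounded Hausdorff distance of a genuine $g$-invariant (hence $g^n$-invariant) quasi-axis, and that an infinite-order isometry moving the vertex $1$ a distance $d$ has its axis within distance $O(d+\delta)$ of $1$; applied to $g^n$ these give $D\le d(1,g^n)+c$. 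Granting the key estimate, (3) is elementary bookkeeping: for $0<i\le p$ one has $d(1,g^i)\le 2D+i\tau+c\,i\le 2D+p\tau+c\,p$, whereas $d(1,g^p)\ge\max\{p\tau,\ D-c\}\ge\tfrac12(p\tau+D)-\tfrac c2$; comparing these (dealing separately with the bounded range $p\tau+D\le\text{const}$, where $d(1,g^i)$ is itself bounded and $d(1,g^p)\ge1$) yields $d(1,g^i)\le\mu\,d(1,g^p)$ with $\mu=\mu(G,X)$.

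The main obstacle is precisely this uniformity in $g$. The naive ``axis'' running through the vertices $g^n$ themselves is worthless, as it may backtrack by the (arbitrarily large) conjugator length of $g$ at every vertex; this forces one to work with a bona fide quasi-axis $\mathcal{A}_g$ that may sit far from $1$, and hence to let $D$ depend on $g$ while keeping $c$ and $\mu$ uniform. Making the quasigeodesic constants of $\mathcal{A}_g$, the comparison of $\mathcal{A}_g$ with the axes of the powers $g^n$, and the ``displacement versus distance-to-axis'' bound simultaneously uniform is the technical core of the argument, and the uniform lower bound $\tau(g)\ge\tau_0$ is exactly what stops these constants from degenerating as $\tau(g)\to0$.
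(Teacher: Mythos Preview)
Your treatment of (1) and (2) coincides with the paper's: both simply invoke the Morse Lemma from \cite{BH} and then dispose of (2) by introducing a diagonal in the quadrilateral.

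For (3), the paper offers no argument whatsoever; it just cites \cite[Proposition~4.7]{mb:CLfib}, a companion preprint by the same author. Your direct proof is sound, and is the natural way to establish such a statement. The uniform positive lower bound $\tau_0$ on stable translation lengths is indeed the key input, and it does exactly what you say: since $T\le\tau+C_0\delta$ and $d(x_0,g^nx_0)\ge n\tau$, the piecewise-geodesic axis $\mathcal{A}_g$ is a quasigeodesic with constants governed by $T/\tau\le 1+C_0\delta/\tau_0$, hence uniform in $g$. The two-sided estimate $D-c\le d(1,g^n)\le 2D+n\tau+cn$ then follows (the lower bound from the tree-like displacement formula $d(1,g^n)\approx 2D+n\tau$ with $O(\delta)$ error, which is even stronger than what you record), and your bookkeeping extracting a uniform $\mu$ is correct, including the separate treatment of the bounded regime $p\tau+D\le\text{const}$, where $d(1,g^p)\ge 1$ because $g$ has infinite order. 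Without access to \cite{mb:CLfib} one cannot say whether your argument matches the one cited, but the quasi-axis-plus-uniform-$\tau_0$ approach is the standard route to results of this type, so it is very likely the same in spirit.
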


\begin{proof}  (1) is the standard Morse Lemma for hyperbolic spaces,  \cite{BH}, p.~401. Item  (2) follows easily
from (1) and the slimness of triangles (after introducing a diagonal).  
Assertion (3)  is  proved in \cite[Proposition 4.7]{mb:CLfib}.
\end{proof}
 
 \begin{lemma}\label{l:local-geod}
For any $\delta$-hyperbolic group $G$ with finite generating set $X$,  
there exist positive constants $\e_0$ and $\l_0\ge 1$ such that  the following statements hold.
\begin{enumerate}
\item  For each  conjugacy class of  infinite-order elements in $G$,   if $w\in F(X)$ is a word of minimal length 
among all representatives of this class,  then the lines in $\cay(G,X)$ labelled $w^*$ are 
$(\l_0, \e_0)$-quasigeodesics;
\item moreover,  if $|w| > 12\delta$ then $d(1, w^p) \ge  \frac{p}{2}|w| -2\delta$ for all $p\ge 0$.
\end{enumerate} 
\end{lemma}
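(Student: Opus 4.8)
The plan is to recognise the bi-infinite periodic path $w^\star$ as a local geodesic and then feed this into the local-to-global principle for geodesics in $\delta$-hyperbolic spaces (Bridson--Haefliger, \cite{BH}).

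For part (1), the first step is to observe that if $w$ is a representative of minimal length of a conjugacy class of infinite-order elements, with $g$ the element it represents, then every cyclic conjugate of $w$ --- in particular $w$ itself --- is a geodesic word: if $w = uv$ then the word $vu$ represents the conjugate $vgv^{-1}$ of $g$, so by minimality the element $vgv^{-1}$ has length at least $|w| = |u|+|v|$, while $vu$ has word-length at most $|u|+|v|$, which forces the word $vu$ to be geodesic. Since every subword of $w^\star$ of length at most $|w|$ is a prefix of some cyclic conjugate of $w$, it too is geodesic, so $w^\star$ is an $|w|$-local geodesic. When $|w| > 8\delta$, \cite{BH} then gives that $w^\star$ is a $(\lambda_0,\varepsilon_0)$-quasigeodesic with $\varepsilon_0 = 2\delta$ and $\lambda_0 = 3$ --- the hypothesis $|w| > 8\delta$ being exactly what makes $(|w|+4\delta)/(|w|-4\delta) < 3$ --- constants that do not depend on $w$; and the finitely many conjugacy classes whose shortest representative has length at most $8\delta$ are dealt with individually, each infinite-order $g$ giving a quasi-isometric embedding $n \mapsto g^n$, after which $\lambda_0,\varepsilon_0$ may be enlarged once and for all.

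For part (2), fix $w$ with $|w| > 8\delta$ and a geodesic $\sigma$ from $1$ to $g^p$. By \cite{BH} again, an $\ell$-local geodesic with $\ell > 8\delta$ lies in the $2\delta$-neighbourhood of any geodesic joining its endpoints, so each vertex $g^i$ on the path $w^\star$ has a point $\sigma(t_i)$ with $d(g^i,\sigma(t_i)) \le 2\delta$, where necessarily $t_0 = 0$ and $t_p = d(1,g^p)$. The triangle inequality then gives $|t_{i+1}-t_i| \ge d(g^i,g^{i+1}) - 4\delta = |w| - 4\delta$, and once one knows that the $t_i$ are monotone one obtains
\[
d(1,g^p) \;=\; \sum_{i=0}^{p-1}(t_{i+1}-t_i) \;\ge\; p\,\bigl(|w|-4\delta\bigr) \;\ge\; \tfrac{p}{2}|w| - 2\delta ,
\]
where the last inequality uses $|w| > 8\delta$.

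The main obstacle is exactly this monotonicity $0 = t_0 \le t_1 \le \cdots \le t_p = d(1,g^p)$ of the projections; it is here that the local-geodesic property (rather than the mere quasigeodesic conclusion of part (1)) is genuinely needed, since the nearest-point projections to $\sigma$ of evenly spaced points on an arbitrary quasigeodesic may well run back and forth. I expect to establish it by the same thin-quadrilateral argument that proves the $2\delta$-neighbourhood statement: a backward jump $t_{i+1} < t_i$ would have magnitude at least $|w| - 4\delta > 4\delta$, and, applying $2\delta$-slimness to the quadrilateral whose sides are the geodesic subpath of $w^\star$ from $g^i$ to $g^{i+1}$, two short projection segments, and the intervening arc of $\sigma$, one derives a contradiction with $|w| > 8\delta$; any small residual cases can again be absorbed using the finiteness of the set of words of bounded length.
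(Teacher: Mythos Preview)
Your treatment of part~(1) is correct and is exactly the paper's argument with the justification filled in: the paper simply invokes \cite[III.H.1.13]{BH} for $|w|>8\delta$ and absorbs the finitely many shorter classes by enlarging the constants, leaving implicit the point you make explicit, namely that every cyclic conjugate of a shortest conjugacy representative is a geodesic word, so that $w^\star$ is an $|w|$-local geodesic.

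For part~(2) the paper does nothing beyond citing \cite[III.H.1.13(3)]{BH}; you instead attempt to reprove the bound via projections $t_i$ and monotonicity. The reduction is correct, but the single thin-quadrilateral sketch you offer for monotonicity does not yield a contradiction. If $t_{i+1}<t_i$, the quadrilateral on $g^i,\,g^{i+1},\,\sigma(t_{i+1}),\,\sigma(t_i)$ has two long sides of comparable length (roughly $|w|$ and $\ge |w|-4\delta$) joined by two short sides of length $\le 2\delta$, and $2\delta$-slimness is entirely consistent with this: the long sides can fellow-travel one another with no obstruction. Nothing in that local picture distinguishes a ``forward'' from a ``backward'' jump. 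What actually forces monotonicity is the behaviour at the junction $g^{i+1}$ (or $g^i$): one must use that the two $|w|$-segments of $w^\star$ meeting there concatenate without doubling back, which requires bringing a third vertex such as $g^{i+2}$ into the argument. This is precisely the content of the proof of \cite[III.H.1.13]{BH}, so the cleanest fix is to cite that theorem directly for part~(2), as the paper does, rather than redo it.
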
 

\begin{proof}
If  $|w|>8\delta$, then according to Theorem III.H.1.13 of \cite{BH},  
the lines labelled $w^*$ are $(\l_0,\e_0)$-quasigeodesics where the
constants $\l_0, \e_0$ depend only on $\delta$.
There are only finitely many $w$ to consider with $|w| \le 8\d$,  so increasing the constants $\l_0, \e_0$ 
if necessary,  we may assume that the lines for these are also $(\l_0,\e_0)$-quasigeodesics.   
 Item (2)  is a special case of   \cite[Theorem III.H.1.13(3)]{BH}.
\end{proof} 

\begin{lemma}\label{l:find-root}
If $G$ is torsion-free and hyperbolic, then there is an algorithm that, given a word $w$ in the generators, will decide if 
$w$ is non-trivial in $G$ and, if it is,  will produce a word $w_0$ such that $C_G(w) = \<w_0\>$.
\end{lemma}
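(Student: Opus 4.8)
The plan is to reduce the search for $w_0$ to a finite, effectively computable set of candidates, combining the solvability of the word problem in hyperbolic groups with the monotonicity estimate of Lemma~\ref{l:constants}(3).

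First I would use the (standard) solvability of the word problem in $G$ to decide whether $w=1$; if it is, there is nothing to prove, so assume $w\neq 1$ in $G$. Since $G$ is torsion-free and hyperbolic, $w$ has infinite order and, as recalled in the introduction, $C_G(w)$ is infinite cyclic; after possibly replacing a generator by its inverse, write $C_G(w)=\<w_0\>$ with $w=w_0^{k}$ for a unique integer $k\ge 1$. The algorithm will output \emph{some} word $v$ and integer $j\ge 1$ with $w=v^{j}$ in $G$ and $j$ as large as possible over all such pairs; this is enough, because any $v$ with $w=v^{j}$ commutes with $w$ and hence lies in $\<w_0\>$, so $v=w_0^{m}$ with $mj=k$, forcing $m\ge 1$ and $j\le k$, while $(w_0,k)$ itself is such a pair, so the maximal $j$ equals $k$ and is realised only by $v=w_0$, for which $\<v\>=\<w_0\>=C_G(w)$.

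The key step is to make the candidate set explicit. Applying Lemma~\ref{l:constants}(3) to the infinite-order element $w_0$ gives $d_G(1,w_0^{i})\le\mu\, d_G(1,w_0^{k})=\mu\, d_G(1,w)\le\mu|w|$ for every $0<i<k$. Since $G$ is torsion-free, the elements $1,w_0,\dots,w_0^{k-1}$ are pairwise distinct, and they all lie in the ball of radius $\lfloor\mu|w|\rfloor$ about $1$ in $\cay(G,X)$; that ball contains at most $K(w):=\beta^{\lfloor\mu|w|\rfloor+1}$ elements, where $\beta=|X\cup X^{-1}|$, so $k\le K(w)$. The case $i=1$ also gives $d_G(1,w_0)\le\mu|w|$, so $w_0$ is represented by some word of length at most $\lfloor\mu|w|\rfloor$. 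I would then enumerate the finitely many words $v$ with $|v|\le\lfloor\mu|w|\rfloor$, test for each such $v$ and each $j$ with $1\le j\le K(w)$ whether $w=v^{j}$ in $G$ (a word-problem query on a concrete word), and output a $v$ realising the largest $j$ among the pairs that pass; by the previous paragraph this $v$ generates $C_G(w)$.

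The crux I expect is the passage from ``$w_0$ is short'' to an \emph{explicit} bound on the exponent $k$: a ball count for $w$ alone yields nothing, and this is exactly where the non-standard Lemma~\ref{l:constants}(3) and the hypothesis that $G$ is torsion-free enter — without torsion-freeness the intermediate powers $w_0^{i}$ need not be distinct and the counting breaks. An alternative that avoids Lemma~\ref{l:constants}(3) would be to first replace $w$ by a minimal-length representative $w'$ of its conjugacy class, using the solvable conjugacy problem, and then bound $k$ through the translation-length estimate of Lemma~\ref{l:local-geod}(2); but this forces one to deal with the fact that a root of a conjugacy-minimal element need not be conjugacy-minimal, making the accounting more delicate.
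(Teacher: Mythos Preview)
Your argument is correct and follows the same overall strategy as the paper: use the word problem to test $w\neq 1$, invoke Lemma~\ref{l:constants}(3) to bound $d(1,w_0)\le\mu\,d(1,w)$ so that a geodesic word for the generator of $C_G(w)$ appears among the finitely many words of length at most $\mu|w|$, and then search this finite list. The one point of divergence is how the exponent is handled. The paper, for each candidate $v$ commuting with $w$, runs an unbounded parallel search that must terminate in one of two ways (either $w=v^{\pm p}$ is found, or coprime powers witnessing that $v$ is a proper power of $w_0$ are found); no a priori bound on $p$ is given. You instead extract an explicit bound $k\le K(w)$ by observing that the distinct elements $w_0,\dots,w_0^{k-1}$ all lie in the ball of radius $\mu|w|$, which makes the whole procedure a bounded search with a computable running time. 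This is a mild improvement in transparency; both arguments are short and rest on the same lemma. One cosmetic point: your appeal to the case $i=1$ of Lemma~\ref{l:constants}(3) to bound $d(1,w_0)$ tacitly assumes $k\ge 2$; when $k=1$ you have $w_0=w$ in $G$ and should simply take $\max(\mu,1)$ in place of $\mu$ (or note that the pair $(w,1)$ is always among the tested candidates).
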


\begin{proof}
The word problem in $G$ is solvable, so we can determine whether $w\neq 1$. If $w\neq 1$, then  $C_G(w)$
is cyclic.  Lemma \ref{l:constants}(3) tells us that $C_G(w)$ is generated by $g\in G$ with $d(1,g)\le \mu |w|$.
For each word $v$ of length at most $\mu |w|$, we use the solution to the word problem to decide if $[v,w]=1$
in $G$. If $[v,w]=1$ and $v\neq 1$ in $G$,  then, searching in parallel,  we use the solution to the word problem to either
find coprime integers $q\ge 1$ and $q'\ge 2$ such that $v^q=w^{\pm q'}$, in which case we discard $v$,  or else 
find an integer $p\ge 1$ such that $w= v^{\pm p}$.   The largest $p$ that is found identifies 
the corresponding $v$ as  $w_0$.
\end{proof}

\subsection{Thin triangles and tripod comparison}\label{s:slim}

In the proof of Proposition  \ref{t:no-power},  
the most convenient definition of hyperbolicity to work with is
the one phrased in terms of {\em{$\eta$-thin triangles}}. We recall this definition here
and refer the reader to  \cite{BH} pp.409-411 for details of how it is equivalent to other
hyperbolicity conditions.    
This formulation is particularly useful when one wants
to make arguments with several juxtaposed triangles or polygons, following small
fibres (point inverses of the maps $f_\Delta$ defined below) around the diagram.

By definition, a triple of positive numbers $l_1,l_2,l_3$ satisfies the triangle inequality
if $l_i+j_j\le l_k$ for all permutations $i,j,k$ of $1,2,3$. Associated to each such triple there
is a unique tripod $T(l_1,l_2,l_3)$ -- that is,  a metric graph with four vertices $c,v_1,v_2,v_3$
such that $d(v_i,v_{i+1})=d(c,v_i)+ d(c,v_{i+1}) = l_i$, with indices {\rm{mod}} $3$;
there is a permitted degeneracy $c=v_i$ when $l_{i-1} + l_i= l_{i+1}$.

Given a triangle $\Delta=\Delta(y_1,y_2,y_3)$ in any geodesic space $Y$, 
with $l_i=d(y_i,y_{i+1})$ (indices $\mod 3$) there is a canonical map
$f_\Delta:\Delta\to T(l_1,l_2,l_3)$.
One says that $X$ has {\em{$\eta$-thin triangles}} if ${\rm{diam}} (f_\Delta^{-1}(t))\le \eta$
for all $\Delta$ and all $t\in T(l_1,l_2,l_3)$; see figure \ref{fig1}. The preimage of the central 
vertex $c\in T(l_1,l_2,l_3)$ is the discrete analogue of the {\em{inscribed circle}}
of $\Delta$. 


\begin{figure} [h]\label{fig1}
\begin{tikzpicture}[scale=0.9][
  line cap=round,
  line join=round,
  >=Stealth,
  thick
]

\begin{scope}[shift={(-4.2,0)}]

  \coordinate (X)  at (-2.3,0.0);
  \coordinate (Y)  at ( 0.0,3.4);
  \coordinate (Z)  at ( 2.3,0.0);

  \coordinate (Mz) at (-0.85,1.55);
  \coordinate (Mx) at ( 0.85,1.55);
  \coordinate (My) at ( 0.00,0.5);

\coordinate (O) at (0.0,1.3);
\draw[line width=1.2pt] (O) circle [radius=0.85];


\begin{scope}
  \clip (X) .. controls (-0.75,1.55) .. (Y)
        .. controls (0.75,1.55) .. (Z)
        .. controls (0.00,0.65) .. cycle;

  \begin{scope}
    \clip (Mz) -- (Mx) -- (Y) -- cycle;
    \foreach \yy in {1.7,1.89,...,3.3} {
      \draw[line width=0.25pt] (-4,\yy) -- (4,\yy);
    }
  \end{scope}
  
\begin{scope}
  \clip (X) .. controls (-0.75,1.55) .. (Y)
        .. controls (0.75,1.55) .. (Z)
        .. controls (0.00,0.65) .. cycle;

  \begin{scope}
    \clip (X) -- (Mz) -- (My) -- cycle;

   \def\m{-1.2352941}

    \foreach \c in {-2.2,-1.9,...,0.4}  {
      \draw[line width=0.25pt] (-4,{\m*(-4)+\c}) -- (4,{\m*(4)+\c});
    }
  \end{scope}
\end{scope}

\begin{scope}
  \clip (X) .. controls (-0.75,1.55) .. (Y)
        .. controls (0.75,1.55) .. (Z)
        .. controls (0.00,0.65) .. cycle;

  \begin{scope}
    \clip (My) -- (Mx) -- (Z) -- cycle;

   \def\m{1.2352941}

    \foreach \c in {-2.2,-1.9,...,0.4} {
      \draw[line width=0.25pt] (-4,{\m*(-4)+\c}) -- (4,{\m*(4)+\c});
    }
  \end{scope}
\end{scope} 

\end{scope} 
  
  \draw (X) .. controls (-0.75,1.55) .. (Y);
  \draw (Y) .. controls (0.75,1.55) .. (Z);
  \draw (X) .. controls (0.00,0.65) .. (Z);

  \draw (Mz) -- (Mx);
  \draw (Mz) -- (My);
  \draw (My) -- (Mx);

  \fill (X)  circle (2pt) node[below left] {$y_1$};
  \fill (Y)  circle (2pt) node[above] {$y_3$};
  \fill (Z)  circle (2pt) node[below right] {$y_2$};

  \fill (Mz) circle (2pt) node[left] { $c_2$};
  \fill (Mx) circle (2pt) node[right] { $c_1$ };
  \fill  (0.00,0.45) circle (2pt) node[below] {  $c_3$};

  \node at (-0.05,1.05) {$\ \le \eta$};

\end{scope}

\draw[-{Stealth[length=3mm]}] (-0.65,1.6) -- (0.65,1.6);
\node at (0,1.95) { };

\begin{scope}[shift={(4.2,0)}]

  \coordinate (Ytop) at (0,3.0);
  \coordinate (M)    at (0,1.05);
  \coordinate (Xr)   at (-2.0,0.0);
  \coordinate (Zr)   at ( 2.0,0.0);

  \draw (Ytop) -- (M);
  \draw (M) -- (Xr);
  \draw (M) -- (Zr);

  \fill (Ytop) circle (2pt) node[above] {$v_3$};
  \fill (M)    circle (2pt) node[above right] {$\ c$};
  \fill (Xr)   circle (2pt) node[below left] {$v_1$};
  \fill (Zr)   circle (2pt) node[below right] {$v_2$}; 

\end{scope}

\end{tikzpicture}
\caption{Thin triangles} 
\end{figure} 


\begin{lemma}\label{l:slim}
A  group $G$ with finite generating set $X$ is hyperbolic if and only if there is a constant $\eta>0$ such that
all triangles in the Cayley graph $\cay(G,X)$ are $\eta$-thin. 
\end{lemma}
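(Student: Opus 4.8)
The plan is to prove the two implications separately; finiteness of $X$ is used only to guarantee that $\cay(G,X)$ is a genuine geodesic space, and ``hyperbolic'' is understood in the $\delta$-slim sense of the preamble to Lemma~\ref{l:constants}. The implication $\eta$-thin $\Rightarrow$ hyperbolic is a formal consequence of the definition of $f_\Delta$, whereas the reverse implication is the substantive one, where the task is to manufacture a uniform constant out of the purely qualitative slimness hypothesis.

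\emph{Thin implies slim.} Suppose every triangle is $\eta$-thin, and let $\Delta=\Delta(y_1,y_2,y_3)$ be a geodesic triangle with $l_i=d(y_i,y_{i+1})$. The comparison map $f_\Delta$ restricts to an isometry of each side $[y_i,y_{i+1}]$ onto the concatenated legs $[v_i,c]\cup[c,v_{i+1}]$ of $T(l_1,l_2,l_3)$; in particular each leg $[c,v_j]$ is the $f_\Delta$-image of a sub-arc of \emph{both} sides of $\Delta$ incident to $y_j$. Hence, given any point $x$ on a side of $\Delta$, there is a point $x'$ on one of the other two sides with $f_\Delta(x')=f_\Delta(x)$, and then $d(x,x')\le\diam f_\Delta^{-1}(f_\Delta(x))\le\eta$. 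So every side lies in the $\eta$-neighbourhood of the union of the other two, i.e.\ $G$ is $\eta$-hyperbolic.

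\emph{Slim implies thin.} Assume triangles are $\delta$-slim. The crux is the claim that in a $\delta$-slim triangle $\Delta(y_1,y_2,y_3)$, if $p\in[y_1,y_2]$ and $q\in[y_1,y_3]$ satisfy $d(y_1,p)=d(y_1,q)=r$ with $r\le d(c,v_1)=\tfrac12(l_1+l_3-l_2)$, then $d(p,q)\le 4\delta$. Granting this, I would read off $\eta$-thinness with $\eta=4\delta$: for $t$ on a leg, say $t\in[v_1,c]$, the fibre $f_\Delta^{-1}(t)$ consists precisely of the two points of $[y_1,y_2]$ and $[y_1,y_3]$ at distance $d(v_1,t)\le d(c,v_1)$ from $y_1$, so $\diam f_\Delta^{-1}(t)\le 4\delta$ by the claim; and $f_\Delta^{-1}(c)$ is the inscribed triple, one point on each side, whose three pairwise distances are each $\le 4\delta$ by the claim applied in turn at the vertices $y_1,y_2,y_3$. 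Thus $\diam f_\Delta^{-1}(t)\le 4\delta$ for all $t$, which is $\eta$-thinness.

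To prove the claim I would first establish the corner estimate: if $p\in[y_1,y_2]$ with $d(y_1,p)\le d(c,v_1)$, then $d(p,[y_1,y_3])\le 2\delta$. By slimness the closed sets $A=\{x\in[y_1,y_2]:d(x,[y_1,y_3])\le\delta\}$ and $B=\{x\in[y_1,y_2]:d(x,[y_2,y_3])\le\delta\}$ cover $[y_1,y_2]$ and contain $y_1$, $y_2$ respectively. If $p\in A$ we are done; otherwise a supremum argument (using that $A$ is closed and $y_1\in A$) produces the last point $p_0$ of $[y_1,y_2]$, measured from $y_1$, that lies in $A$, and $p_0\in B$ as well, since points just beyond $p_0$ lie in the complement of $A$, hence in $B$, which is closed. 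Feeding a point $w\in[y_2,y_3]$ with $d(p_0,w)\le\delta$ into the triangle inequality — bounding $d(y_1,w)$ below by $d(y_1,y_3)-d(y_3,w)$ and $d(y_2,w)$ below by $d(y_2,p_0)-\delta$ — gives $d(y_1,p_0)\ge d(c,v_1)-\delta$, whence $d(p,p_0)=d(y_1,p)-d(y_1,p_0)\le\delta$ and so $d(p,[y_1,y_3])\le\delta+\delta=2\delta$. Finally, given $p,q$ as in the claim, pick $q''\in[y_1,y_3]$ with $d(p,q'')\le 2\delta$; then $|d(y_1,q'')-r|\le 2\delta$, so $q$ and $q''$ are points of the geodesic $[y_1,y_3]$ at distance $\le 2\delta$, and $d(p,q)\le d(p,q'')+d(q'',q)\le 4\delta$. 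The sole obstacle is this corner estimate — extracting a uniform bound from qualitative slimness; the resulting $\eta=4\delta$ is far from sharp, but any finite value suffices here. (This is the content of \cite[III.H.1.17]{BH}.)
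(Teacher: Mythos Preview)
Your argument is correct and is precisely the standard proof of \cite[III.H, Proposition 1.17]{BH}; the paper itself gives no proof of this lemma, simply referring the reader to \cite{BH}, pp.~409--411, so you have supplied exactly what that reference contains.

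One small wording quibble in the corner estimate: ``the last point of $[y_1,y_2]$ lying in $A$'' could in principle lie beyond $p$ (for instance if $y_2\in A$), and then neither ``points just beyond $p_0$ lie in $A^c$'' nor ``$d(p,p_0)=d(y_1,p)-d(y_1,p_0)$'' is guaranteed. What you actually want is the first exit point of $[y_1,p]$ from $A$, namely $p_0=x_{t_0}$ with $t_0=\sup\{t\le d(y_1,p):\ [y_1,x_t]\subset A\}$; with that choice $p_0\in A\cap B$, $d(y_1,p_0)\le d(y_1,p)$, and every line of your estimate goes through verbatim to give $\eta=4\delta$.
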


\section{Avoiding Proper Powers}\label{s:no-roots}

In this section we prove Proposition \ref{t:no-power} and deduce Proposition \ref{p:no-roots}. 
The proof of Proposition \ref{t:no-power}  is surprisingly long and challenging.  I regret that I have been unable to find a
more elementary proof of Proposition \ref{p:no-roots}.

\firstletterprop*

\subsection{Set-up and strategy}

We assume that $G$ is $\delta$-hyperbolic with respect to a fixed finite generating set $X$
and that geodesic triangles in $\cay(G,X)$ are $\eta$-thin  in the sense of section \ref{s:slim}. 
For each $g\in G$ we select a geodesic $\sigma_g\in F(X)$ representing it.  To avoid a clutter
of notation, we write $d$ instead of $d_G$.

We assume that constants $N$ and $K$ have been chosen with $N>\!\!> k >\!\!> 0$,
where $k=d(1,a^K)$.  The values of $N$ and $k$ will be specified more precisely at the end of the proof; they will be chosen
so that $k$ and $N-k$ dwarf  various combinations of the other constants that arise in the proof.   Suppose
$$d(1,h) >  N \hbox{    $>\!\!>$    } k=d(1,a^K).$$ 

Consider the Cayley graph $\Q_a:=\cay(G/\<\!\<a\>\!\>, X)$.
The quotient map $\cay(G,X)\to \Q_a$ preserves the lengths of
paths and hence does not increase distances.  If two paths in  $\Q_a$ are labelled by the
same word $w$ in the generators, then these two paths have the same length in $\Q_a$
and the same diameter, which we denote by $$\diam_{\Q}(w).$$ We also write $$\diam_{\Q}(S)$$ for the 
diameter in $\Q_a$ of  subsets  $S\subset \cay(G,X)$.

The constant $\xi := \diam_\Q(\sigma_a)$ will play a role in the proof, as will the
constants $E$ and $\mu$ from Lemmas \ref{l:constants}  and
the constant $C_0$ obtained by appealing to Lemma \ref{l:constants} 
with the constants $\l_0, \e_0$ from  Lemma \ref{l:local-geod} in place of $\l,\e$. 

We want to derive a contradiction from the assumption that there  exists $g\in G$ with $g^p=ha^K$
and $p\ge 2$. The proof will divide into various cases, each giving rise to a different geometric configuration.
In each case the strategy for obtaining a contradiction is the same: we want to identify two long arcs that are close in $\cay(G,X)$ but are such that the image of one has small diameter in $\Q_a$ while the image of the other is large.

The case where $p$ is odd is more complicated than the case where $p$ is even.   We begin with the
even case, which immediately reduces to the case $p=2$.

\subsection{Excluding the possibility that $g^2=ha^K$}\label{s:even}

In order to analyse this possibility, we consider the configuration in figure \ref{fig2}. This portrays
two geodesic triangles in the Cayley
graph of $G$ with their inscribed circles drawn;   these triangles are $\Delta(1,h,ha^K)$
and   $\Delta(1,g,g^2)$. The  vertices are labelled by group elements.  The top arc marked $a^K$ connects
$h$ to $ha^K$ and is not assumed to be a  geodesic, rather it is the path in $\cay(G,X)$ beginning at $h$
that is labelled $\sigma_a^K$; we write $a^K$ rather than $\sigma_a^K$ to limit the explosion of notation. 
For some $\l\ge 1$ and $\e\ge 0$, this arc is a $(\l, \e)$-quasigeodesic, so it is a Hausdorff
distance at most $E$ from the geodesic
$[h, ha^K]$ that is drawn horizontally, where  $E$ is the constant of Lemma \ref{l:constants}(1). 
Increasing $E$ by $|\sigma_{a}|/2$ if necessary, we can assume that every point of $[h, ha^K]$ is within
a distance $E$ of a vertex $ha^i$ on the top arc.
 

\begin{figure}\label{fig2}
\begin{tikzpicture}[
    line cap=round,
    line join=round,
    >=Stealth,
    scale=0.95
]

\coordinate (A) at (0,0);      
\coordinate (B) at (0,6.0);    
\coordinate (C) at (5.7,6.0);  
\coordinate (D) at (1.35,3.55);  
\coordinate (E) at (3.6,2.6);  
\coordinate (F) at (1.8,4.4);  
\coordinate (G) at (1.0,5.0);  
\coordinate (M) at (2.5,2.5);  
\coordinate (N) at  (3.2,3.4); 
\coordinate (W) at  (3.5,4.3); 

\draw[thick] (A) -- (B);
\draw[thick] (B) -- (C);

\draw[thick] (G) circle[x radius=0.99, y radius=0.99];
\draw[thick] (B) .. controls (3.0,6.4)  .. (C);

\draw[thick] (2.25,3.3) ellipse[x radius=0.95, y radius=0.82];

\draw[thick] (A) .. controls (1.0,3.0) .. (F);
\draw[thick] (A) .. controls (1.8,2.6) and (M) .. (E);
\draw[thick] (F) .. controls (3.0,5.5) .. (C);
\draw[thick] (E) .. controls (3.15,3.4) .. (W);
\draw[thick] (W) .. controls (4.3,5.5) .. (C);

\draw[thick, -{Stealth[length=3mm]}] (3.9,2.6) -- (6.0,5.7);
\draw[thick, -{Stealth[length=3mm]}] (0.3,0.0) -- (3.6,2.4);

\fill (A) circle (1.6pt) node[below left] {$1$};
\fill (B) circle (1.6pt) node[left] {$h$};
\fill (C) circle (1.6pt) node[above right] {$ha^K=g^2$};
\fill (D) circle (1.4pt) node[below right] {$m$};
\fill (E) circle (1.6pt) node[below right] {$\ g$};
\fill (F) circle (1.4pt) node[above left] {$c_3$};
\fill (M) circle (1.4pt) node[above] {$m_1$};
\fill (N) circle (1.4pt) node[left] {$m_2$};
\fill (W) circle (1.4pt) node[right] {$\ c_3'$};

\fill (0,5.0) circle (1.2pt) node[right] {$c_1$};
\fill (1.0,6.0) circle (1.2pt) node[below] {$c_2$};

\node at (2.9,6.55) {$a^K$};
\node at (3.0,6.0) {$\raisebox{-0.2ex}{\scalebox{1.2}{$\rightarrow$}}$};
\node at (3.0,5.7) {$\sigma_{a^K}$};
\node at (5.3,4.0) {$\sigma_g$};
\node at (2.2,1.0) {$\sigma_g$};
\node  at (0.0, 3.0) {$\raisebox{0.2ex}{\scalebox{1.2}{$\uparrow$}}$ };
\node at (0.0,3.0)[left] {$\sigma_h$};

\draw[thick,<->] (-1.0,0.0) -- (-1.0,6.0);
\node at (-1.45,3.1) {$\ge N$};

\end{tikzpicture}

\caption{$g^2=ha^K$} 
\end{figure} 


The side $[1,h]\subset \Delta(1,h,a^K)$ is labelled by the geodesic word $\sigma_h$ and
the sides $[1,g]$ and $[g,g^2]$ of $\Delta(1,g,g^2)$ are both labelled by the  geodesic
word $\sigma_g$, while $[1,g^2]=[1,ha^K]$ is labelled by the geodesic word $\sigma_{g^2}$. 
The inscribed circle of the  isosceles triangle $\Delta(1,g,g^2)$ intersects $[1,g^2]$ at the midpoint $m$.

Because $d(1,h)$ is the distance between  $1$ and the image of $h$ in $\Q_a$, we have
 $d(1,h)\le d(1, ha^i)$ for all $i\in\Z$.   
The geodesic  $[h,g^2]$ is $E$-close to the set of vertices $ha^i$, 
so the path  $[1, c_1, c_2]$ has length at least $d(1,h)-E$ and
\begin{equation}\label{e1}
d(h,c_2) = d(h,c_1) \le d(c_1, c_2) + E \le \eta + E.
\end{equation} 
From this we obtain a lower bound on $d(c_3, g^2)$,
\begin{equation}\label{e3}
d(c_3, g^2) = d(c_2, g^2) = d(h,g^2) - d(h,c_2) \ge k - (\eta + E).
\end{equation}
For the midpoint $m$ we have,  from the triangle inequality,  
\begin{equation}\label{e4}
d( m, g^2) = \frac{1}{2} d(1, g^2)\ge\frac{1}{2} ( d(1,h) - d(h, g^2)) \ge \frac{1}{2} (N-k) >\!\!> k.
\end{equation}
Comparing this with $d(c_3, g^2) = d(c_2, g^2) = d(h,g^2) - d(h,c_2)\le k$, 
we see that $m$ lies between $1$ and $c_3$, as drawn. It follows that the point
$c_3'\in [g,g^2]$ a distance $d(c_3,g^2)$ from $g^2$ lies on the arc $[m_2,g^2]$. Let $\vi$ be the
suffix of $\sigma_g$ that labels the arc $\tau_0:=[c_3', g^2]$.  From (\ref{e3}) we have
\begin{equation}\label{e5}
|\vi| = |\tau_0| = d(c_3,g^2)  \ge k -(\eta + E).
\end{equation}
This arc $\tau_0$ lies in the $\eta$-neighbourhood of $[c_3, g^2]$,  hence in the $2\eta$-neighbourhood of $[c_2, g^2]$
and in the  $(2\eta+E)$-neighbourhood of the arc labelled $a^K$ at the top of the picture. The image of this last
arc in $\Q_a$ has diameter $\xi$,  therefore
\begin{equation}\label{e6}
\diam_{\Q}(\vi) \le 2\eta + E + \xi.
\end{equation}

We now focus on a neighbourhood of the vertex $g$ in figure \ref{fig2} as well as the arc $[1,g]$ and the initial segment of $[1,h]$ that has length $d(1,g)$; see figure \ref{fig3}


\begin{figure}\label{fig3}

\begin{tikzpicture}[
    line cap=round,
    line join=round,
    >=Stealth,
    scale=1.0
]

\coordinate (P) at (0,0);        
\coordinate (U) at (0,4.5);      
\coordinate (M) at (4.55,1.85);  
\coordinate (N) at (5.0,1.52);  
\coordinate (O) at (5.9 , 2.0);  
\coordinate (G) at (6.2,0.55);   
\coordinate (Mtwo) at (5.87,2.75); 
\coordinate (C) at (4.85,2.85);  

\draw[thick] (P) -- (U);
\draw[thick,postaction={decorate},decoration={
    markings,
    mark=at position 1.0  with {\arrow{Stealth[length=3mm,width=2mm]}}
}] (P) -- (0,1.85);
\draw[line width=2.5pt] (P) -- (0, 1.7);

\node[left] at (-0.25,0.85) {$\tau_3$};
\draw[thick,  dotted] (0,4.5) -- (0,4.95); 
\node[below left] at (P) {$1$};

\draw[thick,postaction={decorate},decoration={
    markings,
    mark=at position 0.35 with {\arrow{Stealth[length=3mm,width=2mm]}}
}] (P) .. controls (1.15,1.85) and (2.75,2.65) .. (M);
\draw[line width=2.5pt] (P) ..controls (0.5,0.75).. (1.05,1.3);
\node at (0.85,0.55) {$\tau_2'$};

\draw[thick] (C) circle [radius=1.0];
\fill (M) circle (1.8pt) node[below] {$m_1$};
\fill (Mtwo) circle (1.8pt) node[right] {$m_2$}; 

\draw[thick](M) -- (G);
\draw[line width=2.5pt] (N) -- (G);
\node[below] at (5.2,1.0) {$\tau_1$};

\draw[thick,postaction={decorate},decoration={
    markings,
    mark=at position 0.75 with {\arrow{Stealth[length=3mm,width=2mm]}}
}] (G) .. controls (6.10,0.95) and (5.87,1.65) .. (Mtwo);
\draw[line width=2.5pt] (G) ..controls (6.1, 0.9).. (O);
\node[right] at (6.05,1.25) {$\tau_2$};

\draw[thick] (Mtwo) -- (5.87,4.35);
\draw[thick,  dotted] (5.87,4.35) -- (5.87,4.9); 

\fill (G) circle (1.8pt) node[below right] {$g$};
\fill (P) circle (1.8pt);

\end{tikzpicture}
        \caption{The arcs $\tau_i$}
\end{figure}


\smallskip
\noindent{\em Case 1: Assume $d(m_1,g) \ge |\vi|/2$}}
\smallskip

In this case, a terminal arc of $[m_1,g]$ is labelled by the suffix of $\vi$ that has length  $|\vi|/2$. This arc, which
is labelled $\tau_1$ in figure \ref{fig3}, is $\eta$-close to an initial arc $\tau_2 \subset [g,g^2]$ of the same length.
Because $[1,g]$ and $[g,g^2]$ are both labelled by the same geodesic word $\sigma_g$,  there is an initial arc
of $[1,g]$ with the same label as $\tau_2$; in figure \ref{fig3}  this arc is called $\tau_2'$.  As
$$
|\tau_2'| = \frac{1}{2} |\vi | \le \frac{1}{2} k <\!\!< N,
$$
the arc $\tau_2'$ is $2\eta$-close to an initial arc of $[1,h]$ of the same length; this arc is called $\tau_3$ in figure \ref{fig3}.

Now the desired contradiction emerges, because on  the one hand, using (\ref{e5}), 
\begin{equation}\label{e7}
\diam_{\Q}(\tau_3) = |\tau_3|= |\tau_2| = |\tau_1|  \ge \frac{1}{2}|\vi| \ge  \frac{1}{2} (k - (\eta +E)),
\end{equation}
while  on the other hand, since $\tau_1$ is labelled by a suffix of $\vi$, from (\ref{e6}) we have
\begin{equation}\label{e8}
\diam_{\Q}(\tau_1) \le \diam_{\Q}(\vi) \le 2\eta + E + \xi.
\end{equation}
Moreover,    $\tau_1$  is $\eta$-close to   $\tau_2$,  which shares a label with $\tau_2'$, which 
is $2\eta$-close to  $\tau_3$, so
\begin{equation}\label{e9}
\diam_{\Q}(\tau_3) \le \diam_{\Q}(\tau_2') + 2\eta = \diam_{\Q}(\tau_2) + 2\eta \le \diam_{\Q}(\tau_1) + 3\eta.
\end{equation}
Together,  (\ref{e7}), (\ref{e8}) and (\ref{e9}),  imply  
\begin{equation}\label{e10} 
\frac{1}{2} (k - (\eta +E)) \le 5 \eta + E + \xi, 
\end{equation}
which is nonsense if $k$ is large.

\smallskip
\noindent{\em Case 2: Assume $d(m_1,g) \le |\vi|/2$}
\smallskip

In this case the terminal segment of $[1,g]$ labelled $\vi$ begins with an arc of length at least $|\vi|/2$ on $[1,m_1]$.
This arc is $2\eta$-close to an arc of the same length on $[1,h]$, which has diameter 
at least $|\vi|/2$ in $\Q_a$, so we
reach a contradiction  as before. 

\subsection{Excluding the possibility that $g^p=ha^K$ with $p$ odd}

In this case we will reach a contradiction by finding a long subword of $\sigma_{g^p}$ that labels
two paths in $\cay(G,X)$, one of which has small diameter in $\Q_a$ and one of which has large diameter.
The reader may find it helpful at each stage of the proof to reflect on what happens in the case where $G$ is a free group.

Given $g\in G$,  we  consider those words $\theta$ in the generators of $G$ 
that are shortest  among all words representing elements
in the conjugacy class of $g$; suppose $\theta=x_\theta^{-1}g x_\theta$ in $G$.
Among these, we fix a particular $\theta$ so that $d(1,x_\theta)$ is minimal, 
and we fix a shortest word $x$ representing $x_\theta$.  If $G$ were a free group,
$x\theta^px^{-1}$ would be the geodesic representative of $g^p$ for all $p>0$, but in the general case a
non-trivial argument is needed to bound $|x|$ in terms of $d(1,g^p)$.

There is a  line in the Cayley graph labelled $\theta^*$ through the vertex $x_\theta$;  this line is a $|\theta|$-local geodesic but might not
be a geodesic.  A shortest path from the vertex $1$ to this line is labelled
$x$. The following argument shows
that $x$ is also the label on a shortest path from the vertex $g^p$ to this line for all $p>0$.

Let $x_1$ be any word that conjugates $g^p$ to an element
represented by a cyclic permutation of $\theta^p$, where $p> 0$;
say $x_1^{-1} g^p x_1 = \theta_0^{-1}\theta^p \theta_0$, with $\theta_0$ a prefix of $\theta$.
Then, because roots are unique in torsion-free hyperbolic groups, 
$x_1$ conjugates $g$ to $\theta_0^{-1}\theta \theta_0$, which being a cyclic permutation of
$\theta$ has reduced length $|\theta|$. Therefore, 
\begin{equation}\label{e:least}
|x_1|\ge |x|=d(1,x_\theta),
\end{equation}
as $|x|$ was chosen to be minimal.

We now fix $x$ (which is a geodesic word),  define $g_0\in G$ by
$$
\boxed{g_0=x^{-1}gx}
$$
 and work with the geodesic representative $\theta$ for $g_0$.  
Lemma \ref{l:local-geod}(1) provides constants $\l_0,\e_0$ such that  the lines in $\cay(G,X)$ labelled $\theta^*$ are
$(\l_0,\e_0)$-quasi-geodesic for all $\theta$ under consideration. We fix a constant $C_0>0$
such that every $(\l_0,\e_0)$-quasigeodesic  quadrilateral in $H$
is $C_0$-slim (Lemma \ref{l:constants}(2)).

Recalling our notation that $\sigma_{g^p}$ is a 
 geodesic word  for $g^p$, we  consider the quadrilateral in the Cayley graph of $G$  with
vertices $y_1, y_2, y_3, y_4$ and sides (read
in cyclic order) labelled $x^{-1},\sigma_{g^p}, x, \theta^{-p}$ (see figure \ref{fig4}).
An important point to observe is that there is no path in the Cayley graph of length less than $|x|$ 
that connects either  $y_2$ or $y_3$ to the  fourth side (the arc labelled $\theta^p$), for if there were then the label
$x_1$ on this path would contradict the minimality of $|x|$, as in the argument leading to (\ref{e:least}).


\begin{figure}[ht]
\begin{tikzpicture}[line cap=round, line join=round, >=Stealth]

\coordinate (y1) at (0,3);
\coordinate (y2) at (0,0);
\coordinate (y3) at (5,0);
\coordinate (y4) at (5,3);

\coordinate (z)  at (0,1.55);
\coordinate (y0) at (5,1.55);

\draw[thick] (y1) .. controls (1.7,3.28) and (3.3,3.28) .. (y4);
\draw[thick] (y2) -- (y3);

\draw[thick] (y2) -- (y1);
\draw[thick] (y3) -- (y4);

\draw[thick] (z) .. controls (2.2,1.8) and (3.1,1.8) .. (y0);

\fill (y1) circle (2pt) node[left] {$y_1$};
\fill (y2) circle (2pt) node[below] {$y_2$};
\fill (y3) circle (2pt) node[below] {$y_3$};
\fill (y4) circle (2pt) node[right] {$y_4$};
\fill (z)  circle (2pt) node[left] {$z$};
\fill (y0) circle (2pt) node[right] {$y_0$}; 

\node at (2.6,3.6) {$\theta^p$};

\draw[-{Stealth[length=3mm]}] (1.2,3.35) -- (4.0,3.35);

\draw[-{Stealth[length=3mm]}] (1.1,-0.6) -- (3.9,-0.6);
\node at (2.5,-0.93) {$\sigma_{g^p}$};

\draw[-{Stealth[length=3mm]}] (-1,0.4) -- (-1,2.7);
\node[left] at (-1,1.55) {$x$};

\draw[-{Stealth[length=3mm]}] (6,0.4) -- (6,2.7);
\node[right] at (6,1.55) {$x$};

\draw[thick] (y2) ++(0.40,0) arc (0:90:0.40); 
\draw[thick] (y2) ++(0.70,0) arc (0:90:0.70); 
\draw[thick] (y2) ++(1.00,0) arc (0:90:1.00); 
\draw[thick] (y2) ++(1.30,0) arc (0:90:1.30); 
\draw[thick] (y2) ++(1.55,0) arc (0:90:1.55); 

\node at (-0.2,0.72) {$\ell$};

\end{tikzpicture}
        \caption{When $|x|$ is significant}\label{fig4}
\end{figure} 


\smallskip
\noindent{\em Case 1: When $|x|$ is significant ($|x|\ge k/100$ will suffice).}
\smallskip

The key point to prove is that if $x$ is long, then a long initial arc of $[y_2,y_1]$ is in the $C_0$-neighbourhood of $[y_2,y_3]$.
To this end, consider the  first vertex $z\in [y_2,y_1]$ that is not in this neighbourhood and let $\ell = d(y_2,z)$. We 
shall prove that
\begin{equation}\label{e:ell}
\ell \ge \min\ \biggl\{\frac{1}{2}d(1,g^p) -C_0,\  |x|/2\biggr\}.
\end{equation} 
If $d(y_2, z) \ge d(y_1,z)$ then we are done, so suppose $d(y_1,z) \ge |x|/2 > C_0$. This  supposition 
precludes $z$ from being within
$C_0$ of  $[y_1,y_4]$,  by the minimality of $|x|$, so
 $z$ is a distance at most $C_0$ from   a  point $y_0\in [y_3,y_4]$. 
Note that $d(y_4,y_0)\ge |x| - \ell - C_0$, for if not then
$d(y_2,y_4) \le d(y_2, z) + d(z, y_0) + d(y_0, y_4) < |x|$,
contradicting the minimality of $|x|$.   
It follows that $d(y_0, y_3)\le \ell  + C_0$, and measuring the path $(y_2, z, y_0, y_3)$ we conclude that
$$d(1,g^p) = |\sigma_{g^p}| = d(y_2, y_3) \le 2(\ell + C_0),$$ 
which completes the proof of (\ref{e:ell}).

An entirely symmetric argument provides the same lower bound on the length of an initial segment of $[y_3,y_4]$
that lies in the $C_0$-neighbourhood of a terminal segment of $\sigma_{g^p}$. 

In the setting that  concerns us (figure \ref{fig5}),  $d(1,g^p)= d(1, ha^K)>\!\!> k$, so   
if $|x|\ge k/100$ then $x$ has a prefix $x'$ of length at least $k/200$ that labels two geodesic arcs  in the Cayley graph, 
the first of which is $C_0$-close to an initial segment of $\sigma_{g^p}$ and the second of which is
$C_0$-close to a terminal segment of $\sigma_{g^p}$.  In figure \ref{fig5},  this puts the first arc $(C_0+\eta)$-close
to a segment of the same length on $[1,h]$ and the second arc $(C_0+\eta)$-close to a segment of $[h, ha^K]$
and hence $(C_0+\eta+E)$-close to a segment of the arc labelled $a^K$.


\begin{figure}[ht]
\begin{tikzpicture}[
    line cap=round,
    line join=round,
    >=Stealth,
    scale=0.95
]

\coordinate (A) at (0,0);      
\coordinate (B) at (0,6.0);    
\coordinate (C) at (5.7,6.0);  
\coordinate (F) at (1.8,4.4);  
\coordinate (G) at (1.0,5.0);  
\coordinate (X) at (1.8,0.7);  
\coordinate (Xm) at (0.9,0.35); 
\coordinate (Y) at (5.7,4.2);  
\coordinate (Ym) at (5.7,5.1);  
\coordinate (Z) at (3.0,3.1);  

\coordinate (M) at (1.4,2.6);  
\coordinate (N) at (3.6,5.2);  

\draw[thick] (A) -- (B);
\draw[thick] (B) -- (C);
\draw[thick, ->] (A) -- (Xm); 
\draw[thick] (Xm) -- (X); 
\draw[thick, ->] (C) -- (Ym);
\draw[thick] (Ym) -- (Y);
  
\draw[thick] (G) circle[x radius=0.99, y radius=0.99]; 
\draw[thick] (B) .. controls (3.0,6.4)  .. (C);

\draw[thick] (A) .. controls (1.0,3.0) .. (F); 
\draw[thick] (F) .. controls (3.0,5.5) .. (C); 
\draw[thick, ->] (X) .. controls (2.2,2.1) ..  (Z);
\draw[thick] (Z) .. controls (4.2,4.0) .. (Y); 
 
\draw[thick, ->] (M) .. controls (1.8,3.9) and (2.6,4.6) ..  (N);

\fill (A) circle (1.6pt) node[below left] {$1$};
\fill (B) circle (1.6pt) node[left] {$h$};
\fill (C) circle (1.6pt) node[above right] {$ha^K=g^p$}; 
\fill (Y) circle (1.6pt) node[below right] {$xg_0^p$}; 
\fill (X) circle (1.6pt); 
\fill (F) circle (1.4pt) node[above left] {$c_3$};  
\fill (Xm) circle (0.1pt) node[below right] {$x$};  
\fill (Ym) circle (0.1pt) node[right] {$x$};  

\fill (0,5.0) circle (1.2pt); 
\fill (1.0,6.0) circle (1.2pt); 

\node at (2.9,6.6) {$a^K$}; 
\node at (3.6,3.0) {$\theta^p$}; 
\node at (2.5,3.9) {$\ \sigma_{g^p}$};

\end{tikzpicture}
        \caption{$g^p=ha^K$, general case}\label{fig5}
\end{figure}


These proximities give  contradictory bounds when $k$ is large:
\begin{equation}\label{e:small-x}
\diam_{\Q}(x') \ge |x'| - (C_0+\eta) \ge (k/200) - (C_0+\eta) 
\end{equation} 
versus
$$ 
\diam_{\Q}(x') \le C_0+\eta+E + \xi.
$$
Thus we have proved that if $|x|\ge k/100$ and $k$ is large, then $g = xg_0x^{-1}$ cannot be a  proper root of $ha^K$. 

\smallskip

There are two cases left to consider: either $d(1,g)$ is small (and hence $|x|$ is small), or $d(1,g)$ is large but $|x|$ is small.

\smallskip
\noindent{\em Case 2: When $d(1,g)$ is small}
\smallskip

This case is portrayed in figure \ref{fig6}.   We write $p=2q+1$, so   
$g^{2q+1}=ha^K$.  Suppose  $d(1,g)= D <k/10$, say.
Consider the juxtaposition of geodesic triangles $\Delta(h, g^{2q}, ha^K),\  \Delta(1, h,g^{2q})$
and $\Delta(1,g^q, g^{2q})$, with the latter having the sides $[1,g^q]$ and $[g^q, g^{2q}]$ both labelled $\sigma_{g^q}$.
\begin{figure}[ht]
\begin{tikzpicture}[
    line cap=round,
    line join=round,
    >=Stealth,
    scale=0.95
]

\coordinate (A) at (0,0);      
\coordinate (B) at (0,6.0);    
\coordinate (C) at (5.7,6.0);  
\coordinate (D) at (5.7, 5.0);  
\coordinate (V) at (4.5, 4.3);  

\coordinate (a1) at (4.45, 5.2);  
\coordinate (a2) at (4.70, 5.16);
\coordinate (a3) at (4.95, 5.12);
\coordinate (a4) at (5.20, 5.08);
\coordinate (a5) at (5.45, 5.04);

\coordinate (v2) at (4.74, 4.44); 
\coordinate (v3) at (4.98, 4.58);
\coordinate (v4) at (5.22, 4.72);
\coordinate (v5) at (5.46, 4.86);

\coordinate (E) at (3.2, 2.5);  
\coordinate (F) at (1.8,4.4);  
 
\coordinate (X) at (1.8,0.7);  
\coordinate (Xm) at (0.9,0.35); 
\coordinate (Y) at (5.7,4.2);  
\coordinate (Ym) at (5.7,5.1);  
\coordinate (Z) at (3.0,3.1);  

\coordinate (M) at (1.4,2.6);  
\coordinate (N) at (3.6,5.2);  

\draw[thick] (A) -- (B);
\draw[thick] (B) -- (C);
\draw[thick] (B) -- (D);
\draw[thick] (D) -- (C);  
 
  
\draw[line width=2.5pt] (V)--(D);


\draw (a1) -- (V);
\draw (a2) -- (v2);
\draw (a3) -- (v3);
\draw (a4) -- (v4);
\draw (a5) -- (v5); 


\draw[thick] (B) .. controls (3.0,6.4)  .. (C); 
\draw[thick, ->] (A) .. controls (1.0,3.8) and (1.2,4.3) .. (F); 
\draw[thick] (A) .. controls (1.5,3.0) .. (E);  
\draw[thick] (F) .. controls (3.8,4.8) .. (D);  
\draw[thick] (E) .. controls (3.8,3.8) .. (V);   

\fill (A) circle (1.6pt) node[below left] {$1$};
\fill (B) circle (1.6pt) node[left] {$h$};
\fill (C) circle (1.6pt) node[above right] {$ha^K=g^{2q+1}$}; 
\fill (D) circle (1.6pt) node[right] {$\ g^{2q}$};   
\fill (E) circle (1.6pt) node[below right] {$g^{q}$};

\node at (2.9,6.6) {$a^K$};  
\node at (1.45,4.7) {$\ \sigma_{g^{2q}}$}; 
\node at (v3)[below right] {$v_\infty$}; 

\end{tikzpicture}
        \caption{When $g$ is small}\label{fig6}
\end{figure} 
As $D $ is significantly smaller than $k$, 
 there is a terminal arc of length $k/2$ on $[g^q, g^{2q}]$ that is $2\eta$-close to a terminal
arc of the same length on $[h, g^{2q}]$;  let $ v_\infty$ be the label on this arc of $[g^q, g^{2q}]$.  
The thinness of $\Delta(h, g^{2q}, ha^K)$ tells us that $[h, g^{2q}]$ is contained in the $(\eta+D)$-neighbourhood of 
$[h, ha^K]$ and hence the $(\eta+D+E)$-neighbourhood of the arc labelled $a^K$.   Therefore
\begin{equation}
\diam_{\Q}(\vi) \le 3\eta+D+E + \xi < 3\eta + \frac{1}{10}k+E +\xi.
\end{equation}
With $\vi$ in hand, we concentrate on $[1,h]\cup \Delta(1,g^q, g^{2q})$ and reach a contradiction by
identify arcs 
$$
\tau_1\subset [1,g^q],\ \tau_2\subset [g^q,g^{2q}], \ \tau_2'\subset [1,g^q] \hbox{   and  }\tau_3\subset [1,h]
$$
exactly as we did in Section \ref{s:even} (with constants that have been changed by at most $D$).

\smallskip
\noindent{\em Case 3: When $d(1,g)$  is large but $|x|$ is small.}
\smallskip

We are working with  the decomposition  $g=x^{-1}g_0x$, where  $x$ is a geodesic and $g_0$ is represented by a shortest word representing any conjugate of $g$; let $\theta$ be this word.  
To cover the remaining cases, it suffices to assume that   
 $|x| \le \iota = k/100$ and  $|\theta|\ge (k/10) - 2\iota$, which we approximate by $k/20$ for convenience.  
In this case, we consider the juxtaposition of the geodesic triangle $\Delta(1,h,ha^K)$ (with
$[1,  ha^K]$ labelled $\sigma_{g^p}$) and the quadrilateral  that has sides labelled $\sigma_{g^p},  \ \theta^p$ and, on the
remaining two sides, $x$.   From
Lemmas \ref{l:constants} and \ref{l:local-geod},
we know that this quadrilateral is $C_0$-thin.   
Figure \ref{fig4} portrays this situation.  Consider the terminal arc of length $k/50$ on the side of the
quadrilateral labelled $ \theta^p$.  This arc (which is labelled by a suffix $\zeta$ of $\theta$, since  $|\theta| \ge k/20$)
is contained in the  $(\iota + C_0)$-neighbourhood of a terminal arc of $[c_3,ha^K]\subset [1,ha^K]$ and hence
lies in the $(\iota + C_0+\eta + E)$-neighbourhood of the arc labelled $a^K$ joining $h$ to $ha^K$. Therefore
\begin{equation}\label{e:almost}
\diam_{\Q}(\zeta) \le \iota + C_0+\eta + E + \xi = \frac{1}{100}k + C_0+\eta + E + \xi.
\end{equation}
Now consider the segment labelled $\theta$ at the beginning
on the  $\theta^p$-side of the quadrilateral,   remembering that $\theta$ is a geodesic word.
Let $\omega$ be the terminal part of
this segment that is labelled $\zeta$ (see figure \ref{fig7}).

\begin{figure}[ht]
\begin{center}
\begin{tikzpicture}[
    line cap=round,
    line join=round,
    >=Stealth,
    scale=0.95
]

\coordinate (A) at (0,0);      
\coordinate (B) at (0,4.0);    
\coordinate (C) at (2.0,4.0);  
\coordinate (D) at (3.3,2.6);  
\coordinate (E) at  (4.0, 4.0);  
\coordinate (e1) at  (3.5, 3.0);  
\coordinate (X) at (2.0,0);  

\coordinate (b1) at (0.0, 1.0);  
\coordinate (b2) at (0.0, 2.0); 

\coordinate (c1) at (0.5, 1.0); 
\coordinate (c2) at (1.0, 2.0); 

\coordinate (d1) at (2.3, 0.6); 
\coordinate (d2) at (2.8, 1.6);

\draw[->]  (2.5, 0.6)-- (2.9, 1.4);

\draw[thick] (A) -- (b1);
\draw[thick, ->] (b2) -- (B);
\draw[thick] (A) -- (c1);
\draw[thick, ->] (c2) -- (C);  
\draw[thick] (X) -- (d1);
\draw[thick] (d2) -- (e1);  
\draw[thick] (A) -- (X);
 
  
\draw[line width=2.5pt] (b1)--(b2);
\draw[line width=2.5pt] (c1)--(c2);
\draw[line width=2.5pt] (d1)--(d2);
 

\draw[thick,  dotted] (E) -- (e1); 
 
\fill (A) circle (1.6pt) node[below left] {$1$}; 
\fill (e1) circle (1.6pt) node[right] {$x\theta^2$};     
\fill (d2) circle (1.6pt) node[right] {$x\theta$};      
\fill (X) circle (1.6pt) node[below right] {$x$};       
 
\node at (-0.3,1.5) {$\gamma$};  
\node at (1.2,1.5) {$\beta$};  
\node at (2.1,1.1) {$\omega$};  
\node at (2.9,0.9) {$\xi$};  

\node at (0.5,3.6) {$\sigma_h$};  
\node at (2.5,3.6) {$\sigma_{g^p}$};  

\end{tikzpicture}
\end{center}
        \caption{The final set of contradictory arcs}\label{fig7}
\end{figure} 
The thinness of the quadrilateral (Lemma \ref{l:local-geod}), tells us that  $\omega$
 is $C_0$-close to a geodesic segment of  length at least $|\omega| - 2C_0 \ge (k/50) - 2C_0$
on $[1,ha^K]$.   If we can argue that this
last segment, which we call $\beta$,
 is contained in $[1,c_3]$, then by the thinness of $\Delta(1,h,ha^K)$ it will be $\eta$-close to
a segment $\gamma$ of the same length on $[1,h]$,  and we know that $\diam_{\Q}(\gamma) = |\gamma|$.  As $\gamma$ lies in the
$C_0 + \eta$ neighbourhood of the arc labelled $\zeta$, we could then conclude
that 
\begin{equation} \label{e15}
\diam_{\Q}(\zeta)  \ge \diam_{\Q}(\gamma) - C_0 - \eta \ge (k/50) - 3C_0 -\eta,
\end{equation}
which contradicts (\ref{e:almost}) if $k$ is large enough; and this contradiction will finish the proof.

Thus it only remains to prove that $\beta$ is indeed contained in $[1,c_3]$ (and hence is $\eta$-close to $[1,h]$).
Since $\beta$ is $C_0$-close to $\omega$,  it suffices to 
prove that the latter is contained in a ball of radius less than $N-k-C_0$ about $1$.
And since $\omega$ lies at the end of an arc labelled $\theta$ that begins 
at $x$, it suffices to prove that 
\begin{equation}\label{e16}
|\theta| < N-k - \iota - C_0 =  N - \frac{101}{100} k -C_0.
\end{equation}

We are in the case where $|\theta| \ge k/20 > 12\delta$,  so from  Lemma \ref{l:local-geod}(2) we know that  
\begin{equation}
d(1, g_0^p) \ge \frac{p}{2} |\theta| - 2\delta.
\end{equation}
By the triangle inequality, 
\begin{equation}
N+k \ge d(1, ha^K) =  d(1,  x^{-1} g_0^px) \ge d(1, g_0^p)  - 2|x| .
\end{equation}
As $|x|\le \iota$,  we deduce
\begin{equation}\label{e19}
\frac{p}{2}  |\theta|  \le   N+k  +  2\iota + 2\delta.
\end{equation}
And since $p\ge 3$,  
\begin{equation}\label{e20}
 |\theta|  \le  \frac{2}{3}   ( N+k  +  2\iota + 2\delta\ ) =  \frac{2}{3} \big( N+\frac{51}{50}k  +  2\delta\big).
\end{equation}
We choose  constants $N>\!\!>k$ to ensure that this last quantity is   less than $N-k - \iota - C_0$,
so the estimate (\ref{e16}) is established and the proof is complete. 
\qed

\smallskip
\noindent{\bf Values of $N, K$ and $k$.} The reader who has successfully followed the above proof
will be content that a choice of $N>\!\!>k\!\!>0$ can be made to ensure that all of the desired contradictions are
reached.  For example,   $k$ has to be greater than  $11\eta +3E +2\xi$ to ensure that (\ref{e10}) is false,  it
has to be greater than $200(2C_0 + 2\eta + E + \xi)$ to obtain a contradiction from (\ref{e:small-x}),
and it has to be greater than $50(4C_0+2\eta+E+\xi)$ in order for (\ref{e15}) to contradict (\ref{e:almost}).
When we have settled on what a sufficiently large value of $k$ is, we choose $K$ to be the least
positive integer such that $d(1,a^K)>k$ and then we increase $k$ to get $d(1,a^K)=k$.  After this, $N$ has to be chosen
so that  the estimate in  (\ref{e20})  implies the one in (\ref{e16});  it is sufficient to let $N = 3k+3C_0+4\delta$.
(Psychologically speaking, it can be helpful to imagine that   $N/k$ is much bigger.)

\subsection{Proof of Proposition \ref{p:no-roots}}

\otherletterprop*

\begin{proof}   By composing $F(X)\to G$ with $p:G\to Q$ we can regard $X$ as a finite generating set 
for $Q$ as well as $G$, and $p$ extends to a length-preserving map $\cay(G,X)\to \cay(Q,X)$.  
If $a\in \ker (G\to Q)$, then $\cay(G,X)\to \cay(Q,X)$ factors through $\cay(G,X)\to \Q_a$,
so if  $w_0\in F(X)$ arises as the label on a geodesic path in $\cay(Q,X)$, then
the paths in $\Q_a$ labelled $w_0$ are also geodesics.  

We fix a non-trivial element $a\in \ker (G\to Q)$,  choose a word $\tilde{a}\in F(X)$ representing $a$,
 and fix constants $N$ and $K$ satisfying the statement of Proposition \ref{t:no-power}. 
 We define $\EE\subset F(X)$ to be the set of  words labelling geodesics of length less than $N$ in $\cay(Q,X)$.  
The algorithm that we seek  proceeds as follows: 
given  $w\in F(X)$,  it first uses the solution to the word problem in $Q$ to find a word $w_0\in F(X)$ of
minimal length such that $p(w)=p(w_0)$ in $Q$.  We have just observed that the paths in $\Q_a$ labelled
$w_0$ are geodesics,  so the element $h\in G$ defined by $w_0$ satisfies the hypotheses of  Proposition \ref{t:no-power}
unless $|w_0|< N$.  If $|w_0|<N$, the algorithm stops and outputs $w':=w_0\in\EE$.
If $|w_0|\ge N$,  then $ha^K\in G$ is not a proper power and the algorithm outputs $w':= w_0\tilde{a}^K$.  By construction,  
$p(w)=p(w')$,  and  $C_G(w')=\<w'\>$ if $w'\not\in\EE$. 
\end{proof}

\section{Proof of Theorem \ref{t:iff}}\label{s:proof}
 
We restate the theorem, for the reader's convenience.
 
\mainthm* 

\subsection{Choice of generators and notation} We fix epimorphisms $\pi_1:G_1\onto Q$ and $\pi_2:G_2\onto Q$
so that $$P = \{ (g_1,g_2) \mid \pi_1(g_1)=\pi_2(g_2)\} < G_1\times G_2.$$
All of the groups that we are dealing with are finitely generated,  so we are free to work with whatever generating
set we choose when establishing (un)decidability.  We begin by choosing a finite generating set $X_0$ for $Q$,
which defines an epimorphism from the free group $\mu_0:F(X_0)\onto Q$.  For $i=1,2$, we can factor $F(X_0)\onto Q$
through $\pi_i: G_i\onto Q$; let $\mu_i : F(X_0)\to G_i$ be this lift of $\mu_0$.  We choose a finite set $A_i\subseteq\ker \pi_i$
that generates $\ker \pi_i$ as a normal subgroup and is such that $A_1\cup\mu_i(X_0)$
generates $G_i$. We then define $X=X_0\sqcup A_1\sqcup A_2$ and for $i=1,2$ we extend $\mu_i$
by defining $\mu_i|_{A_i}$ to be the inclusion $A_i\hookrightarrow G_i$ while $\mu_1(A_2)=1$ and  $\mu_2(A_1)=1$. 
We  extend $\mu_0$ by defining $\mu_0(A_1\sqcup A_2)=1$. 
Thus we obtain compatible  sets of generators $\mu_i:F(X)\onto G_i$  for $G_i$ and $\mu_0:F(X)\onto Q$ for $Q$.
This compatibility will aid the transparency of the proof.

We suppress mention of the maps $\mu_i$,  writing expressions such
as ``$w=1$ in $Q$" and ``$u=v$ in $G_1$", for words $u,v,w\in F(X)$, when what we really mean is $\mu_0(w)=1$
and $\mu_1(u)=\mu_1(v)$.

We identify $G_1$ with $G_1\times 1< G_1\times G_2$ and  $G_2$ with $1\times G_2$.  Correspondingly\footnote{caution: with this notation, if $G_1=G_2$ but $\pi_1 \neq \pi_2$ then  $(x,x)$ is in $P$ but it might not be in the diagonal subgroup
of $G\times G$},
we have generators $(x,1)$ and $(1,x)$ for $G_1\times G_2$,  with  $x\in X$, but rather than working
with formal words in these symbols, we work with ordered pairs of words $(u,v)$ with $u,v\in F(X)$. 

Noting that $(a,1)=(a,a)$ and $(1,b)=(b,b)$ in $G_1\times G_2$ for each $a\in A_1\subset X$ and $b\in A_2\subset X$, 
 it is easily verified that $P$ is generated by $\{(x,x) \mid x\in X\}$.  

\subsection{The Proof of Theorem \ref{t:iff}}

We first prove $(2)\implies (1)$.
Given elements $U=(u_1,u_2)$ and $V=(v_1,v_2)$ 
of $P$,  we use the solution to the conjugacy problem in $G_i$
to decide if there exists words $w_1, w_2\in F(X)$   conjugating $u_1$ to $v_1$  in $G_1$
and $u_2$ to $v_2$ in $G_2$, respectively. If there is no such pair,
then $U$ is not conjugate to $V$. If such a pair does exist, then 
we  replace  $V$ by $(w_2,w_2)V({w_2},w_2)^{-1}$.
Thus we may assume, without loss of generality, that $v_2=u_2$ and that
 $w^{-1}u_1w=v_1$  in $G_1$ for some word $w\in F(X)$.

With this reduction, the set of elements of $G_1\times G_2$ conjugating $U$ to $V$ is
$$I=\{ (z_1^pw, z_2^q) \mid \<z_1\>=C_{G_1}(u_1),\ \<z_2\> = C_{G_2}(u_2)\}.$$ 
There is an algorithm to calculate the maximal roots of elements in a torsion-free hyperbolic group (Lemma \ref{l:find-root}), 
so we may assume that we have explicit words in the generators giving us $z_1$ and $z_2$ and
positive integers $e_i>0$ such that $z_i^{e_i}=u_i$.

To determine if $U$ is conjugate to $V$ in $P$,  we must decide whether $I\cap P$ is non-empty.    Since $(u_1,u_2)$ is in $P$,  
by multiplying on the left by a power of $(u_1,u_2)$, we can transform any
$\zeta\in  P\cap I$  into $(z_1^{p_1}w,1) (1,\e)\in I\cap P$ with $\e=z_2^j$ for some $0\le j<e_2$.
And $(z_1^{p_1}w,1) (1,\e)\in P$ if and only if $\e w^{-1} \in \<z_1\>$ in $Q$. Thus,  to determine whether
$I\cap P$ is non-empty, it suffices to check whether one of the finitely many elements $\e w^{-1}$ lies in
the cyclic subgroup of  $Q$ generated by $z_1$, which we can do using the solution to the power problem.
\smallskip

Next we prove $(1)\implies (2)$.  
Suppose that the conjugacy problem in $P$ is solvable.  First we claim that this hypothesis implies that the
word problem in $Q$ is solvable.  To help us see this, we first want to find  $g_i\in \ker(G_i\onto Q)$
with $C_{G_i}(g_i)=\<g_i\>$, for $i=1,2$.  To this end, 
we fix a non-trivial element $b_i\in \ker(G_i\onto Q)$.  For sufficiently large $p>0$,
the quotient $H_i:=G_i/\<\!\< b_i^p \>\!\>$ is hyperbolic (see \cite{olsh}, for example).  If the kernel 
of the  map $H_i\onto Q$ induced by $G_i\onto Q$ is finite, then $Q$ is hyperbolic and therefore has a solvable 
word problem. If $\ker (H_i\onto Q)$  is infinite,  then for each $N_i>0$ we can find an element  $c_i\in \ker(H_i\onto Q)$
with $d_{H_i}(1,c_i)>N_i$,  where $d_{H_i}$ is the word metric obtained by taking the image of $X$ as generators.  We want to 
apply Proposition \ref{t:no-power} with $a_i=b_i^p$. Let $K_i$ and $N_i$ be the constants of that proposition
(with subscripts to indicate which $G_i$ we are working with), choose
$c_i$ as above and,  following the proof of Proposition \ref{p:no-roots},  take a geodesic representative $v_i\in F(X)$ of $c_i$. 
Then $g_i:= v_ia_i^K\in G_i$ is an element of  $\ker(G_i\onto Q)$ that is not a proper power, therefore $C_{G_i}(g_i)=\<g_i\>$.

With $g_i$ in hand,  given $w\in F(X)$ we ask whether $(w^{-1} g_1 w,  g_2)\in P$ is conjugate to 
$(g_1,g_2)$ in $P$.   The elements of $G_1\times G_2$ conjugating $(g_1,g_2)$ to $(w^{-1} g_1 w,  g_2)$ are
$J=\{(g_1^pw, g_2^q)\mid p,q\in \Z\}$,  because $C_{G_i}(g_i)=\<g_i\>$.  And since $\<g_i\>\subseteq  \ker(G_i\onto Q)$,
the intersection $J\cap P$ will be non-empty if and only if $w\in \ker(G\onto Q)$.  Thus deciding whether
$(w^{-1} g_1 w,  g_2)$ is conjugate to  $(g_1,g_2)$ in $P$ tells us whether $w=1$ in $Q$.

In order to prove that the power problem is solvable in $Q$, we must exhibit an algorithm that,
given words $w,u\in F(X)$, will  determine whether of not $w\in \<{u}\>$ in $Q$. 
Using the solution to the word problem in $Q$,  
we first decide  whether or not $[w,u]=1$ in $Q$. 
If $[w,u]$ is non-trivial in $Q$,  then $w\not\in \<u\>$. If $[w,u]= 1$, then we proceed
as follows.  

First we use the solution to the word problem in $Q$ to decide if
$u\in Q$ is  in the image of the exceptional set $\EE$ of Proposition \ref{p:no-roots}. If $u$ is not in 
the image of $\EE$, then
use the algorithm from  Proposition \ref{p:no-roots} to replace $(u,u)\in P$ with $(u_1,u_2)\in P$
where  $u_i\in F(X)$ is such that $u=u_i$ in $Q$
and $C_{G_i}(u_i) = \<u_i\>$.   
 For any $g\in G_1$, the set of elements conjugating $(u_1,u_2)$ to $(g^{-1}u_1g, u_2)$ in $G_1\times G_2$
is then $\{ (u_1^pg, u_2^q) \mid p,q\in\Z\}$. 
Crucially, 
we know $(w^{-1}u_1w, u_2)$ is in $P$ because $[w,u]=1$ in $Q$. 
And deciding if $(u_1,u_2)$ is conjugate
to $(w^{-1}u_1w, u_2)$ in $P$ is equivalent to deciding if $(u_1^p w,u_2^q)\in P$ for some $p,q\in\Z$,
which is equivalent to deciding if ${w} \in \<u\>$ in $Q$. 
Thus, since the conjugacy problem in $P$ is solvable,  
we can decide whether or not  ${w} \in \<u\>$ in $Q$. 

It remains to consider what happens when $u\in Q$ is in the image of the finite set $\EE\subset F(X)$. In this case, 
we use the solution to the word problem in $Q$ to test which of the powers $u^2,u^3,\dots$ are  in the image of $\EE$.
We can assume that the empty word is in $\EE$, so we are simultaneously checking if $u^p=1$ in $Q$. Eventually, 
we will either determine that $u$ has finite order,  $m$ say, or else we will find $p>0$ such that $u^p\not\in\EE$.  If
we find $m=o(u)$, we use the solution to the word problem again to decide if $w=u^i$ in $Q$ for some 
$i\in\{0,\dots,m-1\}$.   If we find $u^p\not\in\EE$, then we use the algorithm of the previous paragraph to decide
whether $w\in \<u^p\>$ in $Q$.  If $w\in \<u^p\>$ then we have proved $w\in \<u\>$. If $w\not\in \<u^p\>$, then
for $i=1,\dots, p-1$ we apply the same algorithm with $wu^i$ in place of $w$ to decide if $wu^i\in \<u^p\>$,
noting that $w\in \<u\>$ if and only if the answer is yes in one case. 
\medskip

The implications $(2)\iff (3)$ were proved in Proposition \ref{ThmA:2iff3}, so the proof is complete.
\qed

\section{Statements and Declarations}
The author was not supported by any grants during the preparation of this manuscript.
The author has no relevant financial or non-financial interests to disclose. 

Data sharing is not applicable to this article, as no datasets were generated or analyzed during the current study.


\begin{thebibliography}{9999}


\bibitem{BBMS} G. Baumslag, M.R. Bridson, C.F. Miller III, H. Short,
{\em Fibre products, non-positive curvature, and decision problems},
 Comment. Math. Helv. {\bf{75}} (2000),  457--477.
  
 

\bibitem{mrb:bfs} M.R. Bridson,  
{\em The geometry of the word problem}, 
     in ``Invitations to geometry and topology",
    (M.R. Bridson and S.M. Salamon, eds.), OUP, Oxford 2002,
     pp. 29--91.
      

\bibitem{mb:CLfib} M.R.~Bridson,  
{\em Conjugacy in fibre products, distortion, and the geometry of cyclic subgroups},
arXiv:2507.17598.
 


\bibitem{BH}
M.R. Bridson and A. Haefliger, Metric Spaces of Non-Positive Curvature, Grundlehren der Mathematischen Wissenschaften,  Vol. 319, Springer-Verlag, Heidelberg-Berlin, 1999.


\bibitem{BHMS} M.R. Bridson, J. Howie, C.F. Miller III, and H.~Short,
{\em On the finite presentation of subdirect products and the nature of residually free groups},
American J. Math., {\bf 135} (2013), 891--933.

\bibitem{BM} 
M.R.~Bridson and C.F.~Miller~III,
\newblock {\em Structure and finiteness properties of subdirect products of groups.}
\newblock Proc. London Math. Soc. (3) {\bf 98} (2009), 631--651.   

\bibitem{collins}  
D.J.~Collins,
{\em The word, power and order problems in finitely presented groups,} in "Word Problems, Decision Problems and the Burnside Problem in Group Theory" (W.W.~Boone, F.B.~Cannonito, R.C.~Lyndon, eds.), Studies in Logic and Foundations of Mathematics {\bf{71}}, North Holland, Amsterdam-London,  1973. 

\bibitem{dison} W. Dison, {\em Isoperimetric functions for subdirect products and Bestvina-Brady groups}, PhD thesis, Imperial College London, 2008. arXiv:0810.4060



\bibitem{gromov} M. Gromov, {\em Hyperbolic groups}, in ``Essays in Group Theory”, pp. 75–263, Math. Sci. Res. Inst. Publ. vol. 8, Springer, New York, 1987.
 
 
\bibitem{LS}  R.C.~Lyndon and M.P.~Sch\"{u}tzenberger,
{\em The equation $a^Mb^N=c^P$ in a free group},
Mich.~Math.~J.  {\bf{9}} (1962),  289--298.

\bibitem{mihailova}
K.A.~Mihailova, {\em The occurrence problem for direct products of groups},
Dokl. Akad. Nauk SSSR {\bf 119} (1958) 1103--1105.

\bibitem{cfm:thesis} C.F.~Miller~III, 
``On group-theoretic decision problems and their classification",
Ann. Math. Studies,
No. 68, Princeton University Press (1971).

\bibitem{minasyan} 
A.~Minasyan,
{\em On conjugacy separability of fibre products},
Proc.~London Math.~Soc. (3) {\bf{115}} (2017),  1170--1206.

\bibitem{olsh}  A.Yu.~Ol'shanskii, 
{\em On residualing homomorphisms and G-subgroups of hyperbolic groups},
Intern.~J.~Alg.~Comput. {\bf{3}} (1993),  365--409.

 

\end{thebibliography}
 \end{document}